\def\z{\mathfrak{z}}
\def\u{\mathfrak{u}}
\def\g{\mathfrak{g}}
\def\h{\mathfrak{h}}
\def\n{\mathfrak{n}}
\def\R{\mathbb{R}}
\def\ad{\operatorname{ad}}
\def\dim{\operatorname{dim}}
\def\tr{\operatorname{tr}}
\def\alt{\raise1pt\hbox{$\bigwedge$}}
\def\pint{\langle \cdotp,\cdotp \rangle }
\def\la{\langle}
\def\ra{\rangle}
\newcommand\ggo{{\mathfrak g}}
\newcommand{\iprod}{\mathbin{\lrcorner}}
\def\Ker{\operatorname{Ker}}
\def\Im{\operatorname{Im}}
\theoremstyle{plain}
\newtheorem{teo}{\bf Theorem}[section]
\newtheorem{cor}[teo]{\bf Corollary}
\newtheorem{prop}[teo]{\bf Proposition}
\newtheorem{lema}[teo]{\bf Lemma}
\theoremstyle{definition}
\theoremstyle{remark}
\newtheorem{rem}[teo]{\bf Remark}
\newcommand\aff{\mathfrak{aff}}
\title{Invariant conformal Killing forms on almost abelian Lie groups}
\author{C. Herrera}
\address{FCEFyN, Universidad Nacional de C\'{o}rdoba, Ciudad Universitaria, 5000 C\'{o}rdoba, Argentina}
\email{cecilia.herrera@unc.edu.ar}
\author{M. Origlia}
\address{FaMAF-CIEM, Universidad Nacional de C\'{o}rdoba, Ciudad Universitaria, 5000 C\'{o}rdoba, Argentina}
\email{marcos.origlia@unc.edu.ar}
\date{}
\begin{document}

\begin{abstract}
We describe completely conformal Killing or conformal Killing-Yano (CKY) $p$-forms on almost abelian metric Lie algebras. In particular we prove that if a $n$-dimensional almost abelian metric Lie algebra admits a non-parallel CKY $p$-form, then $p=1$ or $p=n-1$. In other words, any CKY $p$-form on a metric almost abelian Lie algebra is parallel for $2\leq p\leq n-2$.
Moreover, we characterize almost abelian Lie algebras admitting non-parallel CKY $p$-forms, and we classify all Lie algebras with this property up to dimension $5$, distinguishing also those cases where the associated simply connected Lie group admits lattices. 
\end{abstract}

\maketitle

\tableofcontents

\section{Introduction} The study of Killing forms began in 1951, by Yano  (see \cite{Yano1}), he defined Killing $p$-forms  on a Riemannian manifold $(M,g)$. A $p$-form  $\eta$  is Killing  if it satisfies the following equation
\begin{equation}\label{yanoequationoriginal}
\nabla\eta(X_1,X_2,\dots,X_{p+1})+\nabla \eta(X_2,X_1,\dots,X_{p+1})=0, 
\end{equation}
for all vector fields $X_1$, $X_2$,...,$X_{p+1}$ where $\nabla$ denotes the Levi-Civita connection associated to the metric $g$. In particular, any parallel $p$-form, that is $\nabla \eta=0$, is Killing.

In 1968, Tachibana renamed \eqref{yanoequationoriginal} in  \cite{Tachibana1} as the Killing-Yano equation. Then he extended the concept of Killing $2$-forms to conformal Killing $2$-forms, see \cite{Tachibana}.  During the same year, Kashiwada continued this generalization and defined conformal Killing (or conformal Killing-Yano) $p$-forms for  $p\geq 2$ (\cite{Kashiwada}). A $p$-form $\eta$  is called conformal Killing  if it exists a $(p-1)$-form $\theta$ such that the following equation is satisfied
\begin{align}\nonumber\label{Tachibanaequationoriginal}
		\nabla \eta(X_1,X_2,\dots,X_{p+1})+\nabla& \eta(X_2,X_1,\dots,X_{p+1})=2g(X_1, X_2) \left(X_3 , \dots , X_{p+1}\right)\\
		&  -\displaystyle{\sum_{i=3}^{p+1}}(-1)^i g\left(X_1,X_i\right)\theta\left(X_1, X_3 ,\dots , \widehat{X_i} ,\dots, X _{p+1} \right) \\
		& -\displaystyle{\sum_{i=3}^{p+1}}(-1)^i g\left(X_2,X_i\right)\theta\left(X_2, X_3 ,\dots, \widehat{X_i}, \dots , X _{p+1}\right) \nonumber
\end{align} 
for $X_1,X_2,\dots,X_{p+1}$ vector fields and where $\widehat{X_i}$ means that $X_i$ is omitted. We will abbreviate conformal Killing or conformal Killing-Yano forms by CKY for short, similarly KY will mean Killing or Killing-Yano. We will call \textit{strict} CKY to CKY forms which are not KY.

The interest on the study of this forms has grown up in the last 25 years, because they are considered a powerful tool in the general relativity and supersymmetric quantum field theory. We recommend to see \cite{O.Santillan} for more information.

%In 2001, S. E. Stepanov motivated by the relationship between the Maxwell equations of relativistic electrodynamics and conformal Killing 2-forms, studied the geometry of the space of conformal Killing  $p$-forms in \cite{Stepanov}.

In 2003, U. Semmelman introduced a different point of
view in \cite{Semmelmann} (see also \cite{Stepanov}), he described a conformal Killing $p$-form as a form in the kernel of  a first order elliptic differential operator. 
Equivalently, a $p$-form $\eta$ is  conformal Killing  on  a $n$-dimensional Riemannian manifold $(M,g)$ if it satisfies 
for any vector field $X$ the following equation
\begin{equation}\label{ckyManifolds}
	\nabla_X  \eta=\frac{1}{p+1}X \lrcorner\mathrm{d}\eta-\frac{1}{n-p+1}X^*\wedge \mathrm{d}^*\eta,
\end{equation}
where $X^*$ is the dual 1-form of $X$,  $\mathrm{d}^*$ is the co-differential operator 
and $\lrcorner$ is the contraction. 
If $\eta$ is co-closed ($\mathrm{d}^*\eta=0$), \eqref{ckyManifolds}  is equivalently to \eqref{yanoequationoriginal} and then $\eta$ is  a Killing  $p$-form. 
Semmelmann also named $*$-Killing $p$-forms to closed ($\mathrm{d}\eta=0$) conformal Killing forms.

There are a lots of important results in several works, we mention some of them 
\begin{itemize}
\item A conformal $p$-form ($p\neq 3, 4$) on a compact manifold with holonomy $G_2$ is parallel, \cite{Semmelmann}.
\item The dimension of the vector space of CKY
$p$-forms on a $n$-dimensional connected Riemannian manifold is at most $ \binom{n+2}{p+1} $, \cite{Semmelmann}. 
 \item There are no conformal
Killing forms on compact manifolds of negative constant sectional curvature, \cite{Semmelmann}.
%\item \cite{Stepanov}, 
\item  Descriptions and classifications of Killing forms: on compact K\"ahler manifolds \cite{Yamaguchi}, on compact symmetric spaces \cite{BMS}, on compact quaternion-K\"ahler manifolds \cite{Moroianu-semmelmann}, and on compact manifolds with holonomy $G_2$ or $Spin_7$ \cite{semmelmann2}.
\item  A conformal Killing $p$-forms on a compact Riemannian product is a sum of forms of the following types: parallel forms, pull-back of
Killing-Yano forms on the factors, and their Hodge duals, \cite{Moroianu-semmelmann-08}.
\end{itemize}

In this work we will focus on CKY $p$-forms on Riemannian Lie groups $(G,g)$, in particular when $g$ is left-invariant. 

\subsection{CKY $p$-forms on Lie groups}

Let $(G,g)$ be a Lie group $G$ endowed with a left-invariant Riemannian metric $g$. We denote by 
$\ggo$ the Lie algebra of left-invariant vector fields, which is isomorphic to $T_eG$, the tangent space of $G$ at the identity $e$ of $G$. The left-invariant condition imposed to the metric defines a natural inner product $\pint$ on $T_eG$, and reciprocally every inner product $\pint$ on $T_eG$ set a left-invariant metric on $G$. We will denote to $(\ggo,\pint)$ as the pair of those induced elements.

The left-invariant metric $g$ determines a unique Levi-Civita connection $\nabla$ on $G$, which has the following expression for $x,y,z\in \ggo$, known as the Koszul formula
\begin{equation}\label{koszul}
	2\la \nabla_xy,z\ra=\la [x,y],z\ra-\la [y,z],x\ra+\la [z,x],y\ra.
\end{equation}
In particular, $\nabla_x$ is a skew-symmetric endomorphism on $\ggo$.
We will focus on left-invariant $p$-forms $\omega$, that is for all $a\in G$, $L_a^* \omega=\omega$, where $L_a$ denotes the left-translation by $a\in G$. Such forms  define elements on $\Lambda^p\g^*$, and conversely every element on $\Lambda^p\g^*$ induces a left-invariant $p$-form on $G$. Since, $\mathrm{d} \omega$ and $\mathrm{d}^*\omega$ are left-invariant, one can study $\omega\in \Lambda^p\g^*$ satisfying \eqref{ckyManifolds}. We will say then that a $\omega$ is conformal Killing $p$-form on $\ggo$ if it is a  left-invariant conformal
Killing form on $(G, g)$, that is,
\begin{equation}\label{cky_lie_algebras}
	\nabla_x  \omega=\frac{1}{p+1}x\iprod\mathrm{d}\omega-\frac{1}{n-p+1}x^*\wedge \mathrm{d}^*\omega,
\end{equation}
for all $x\in\g$. 
The space of solutions of the CKY equation \eqref{cky_lie_algebras} is denoted by $\mathcal{CK}^p(\g,\pint)$, similarly we have  $\mathcal{K}^p(\g,\pint)$ and  $\mathcal{*K}^p(\g,\pint)$ the space of Killing $p$-forms and the space of $*$-Killing $p$-forms respectively.

The study of these left-invariant forms on $(G,g)$ with $g$ left-invariant began in \cite{BDS}, where Barberis, Dotti and Santill\'an studied Killing $2$-forms. Then in  \cite{ABD,AD,AD20}, Andrada, Barberis and Dotti continued working on conformal Killing $2$-forms and obtained some results on Lie groups with bi-invariant metric, 2-step nilpotent Lie groups and clasification of 3-dimensional metric Lie algebras admitting a conformal Killing $2$-form.
  
Later, del Barco and Moroianu  described Killing $p$-forms on 2-step nilpotent Lie groups in \cite{dBM19, dBM20}, and conformal Killing 2- or 3-forms on 2-step nilpotent
Riemannian Lie groups in \cite{dBM21}. 
Recently, in \cite{Her-Ori 22} we classified $5$-dimensional metric Lie algebras admitting CKY $2$-forms. For more details about known results and open problems on this topic we recommend \cite{Her-Ori survey}.

As mentioned most of the work done about left-invariant CKY forms on Lie groups is focused on $2$-forms or on $p$-forms on $2$-step nilpotent Lie groups. Therefore, in this paper we take the next step and we study CKY $p$-forms for arbitrary $p$ on a family of solvable Lie groups, namely almost abelian Lie groups.
In this direction, only the case $p=2$ was considered in \cite{AD}.

The paper is organized at follows: The second section is devoted to recall some importants properties about almost abelian Lie groups in general. We also show some useful properties of $p$-forms. In the third section we analyze CKY $p$-forms, and exhibit the main theorem of this work. Finally, in the fourth Section we present some explicit low dimensional examples. In particular,  we classify all Lie algebras with admitting CKY $p$-forms up to dimension $5$, distinguishing also those cases where the associated simply connected Lie group admits lattices.

\

\noindent \textbf{Acknowledgements.} Both authors were partially supported by CONICET, ANPCyT and SECyT-UNC 
(Argentina).

\

\section{Preliminaries}

\subsection{Almost abelian Lie groups}

In this article we will focus on a family of solvable Lie groups, namely, the almost abelian Lie groups.
We recall that a Lie group $G$ is said to be {\em almost abelian} if its Lie algebra $\g$ has a 
codimension one abelian ideal. Such a Lie algebra will be called almost abelian, and it can be 
written as $\g= \R f_1 \ltimes_{\ad_{f_1}} \mathfrak{u}$, where $\mathfrak u$ is an abelian ideal of 
$\g$, and $\R$ is generated by $f_1$. Accordingly, the Lie group $G$ is a semidirect product 
$G=\R\ltimes_\phi \R^d$ for some $d\in\mathbb N$, where the action is given by 
$\phi(t)=e^{t\ad_{f_1}}$. In particular, an almost abelian Lie algebra is nilpotent if and only 
if the operator $\ad_{f_1}|_{\mathfrak u}$ is nilpotent. 
The next result from \cite{Freibert} determines when two almost abelian Lie algebras are isomorphic.  
\begin{lema}\label{ad-conjugated}
	Two almost abelian Lie algebras $\g_1=\R f_1\ltimes_{\ad_{f_1}} \u_1$ and 
	$\g_2=\R f_2\ltimes_{\ad_{f_2}}\u_2$ are isomorphic if and only if there exists $c\neq 0$ such that
	$\ad_{f_1}$ and $c\ad_{f_2}$ are conjugate. 
\end{lema}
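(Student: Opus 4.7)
The proof is an iff; for the easy direction $(\Leftarrow)$, I would construct the isomorphism explicitly. Given $c\neq 0$ and a linear isomorphism $P:\u_1\to\u_2$ with $P\circ\ad_{f_1}=c\,\ad_{f_2}\circ P$, define $\varphi:\g_1\to\g_2$ by $\varphi(f_1):=cf_2$ and $\varphi|_{\u_1}:=P$, extended linearly. The only nontrivial bracket to verify is $\varphi([f_1,v])=[\varphi(f_1),\varphi(v)]$ for $v\in\u_1$, which is precisely the given conjugation identity; the abelian brackets $[v,w]=0$ in $\u_1$ map to $[Pv,Pw]=0$ in $\u_2$. Bijectivity of $\varphi$ follows from that of $P$, so $\varphi$ is a Lie algebra isomorphism.

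For the forward direction $(\Rightarrow)$, suppose $\varphi:\g_1\to\g_2$ is a Lie algebra isomorphism. I would first reduce to the case $\varphi(\u_1)=\u_2$. When $\g_1$ is not nilpotent, the ideal $\u_1$ is abelian, hence nilpotent, and any strictly larger ideal would have to be all of $\g_1$ (since $\u_1$ has codimension one) which is not nilpotent; thus $\u_1$ coincides with the nilradical of $\g_1$, a canonical invariant, and $\varphi(\u_1)=\u_2$ follows. If $\g_1$ is abelian the claim is trivial (both $\ad_{f_i}$ vanish and any $c\neq 0$ works). In the remaining nilpotent, non-abelian case I would consider $\u_1':=\varphi^{-1}(\u_2)$ and pick $f_1'\in\g_1\setminus\u_1'$ to obtain an alternative presentation $\g_1=\R f_1'\ltimes\u_1'$ which $\varphi$ \emph{does} preserve. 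The key observation is that the existence of two distinct codimension-one abelian ideals of $\g_1$ forces $\ad_{f_1}|_{\u_1}$ to have rank at most one: if $W:=\u_1\cap\u_1'$ then $W$ has codimension one in $\u_1$, and requiring $\u_1'$ to also be abelian yields $\ad_{f_1}(W)=0$. Since we are in the nilpotent case, $\ad_{f_1}|_{\u_1}$ is a nilpotent operator of rank $\leq 1$, and any two such operators on vector spaces of the same dimension are conjugate; this yields the required intertwining between $\ad_{f_1}|_{\u_1}$ and a scalar multiple of $\ad_{f_1'}|_{\u_1'}$.

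Once $\varphi(\u_1)=\u_2$ holds, write $\varphi(f_1)=cf_2+u_0$ with $c\in\R$ and $u_0\in\u_2$; since $\varphi$ is bijective and $f_1\notin\u_1$, necessarily $c\neq 0$. Setting $P:=\varphi|_{\u_1}:\u_1\to\u_2$ and applying $\varphi$ to $[f_1,v]=\ad_{f_1}(v)$ for $v\in\u_1$, and noting that $u_0\in\u_2$ acts trivially on the abelian ideal $\u_2$, one obtains $P\circ\ad_{f_1}=c\,\ad_{f_2}\circ P$, establishing the conjugacy. The main obstacle is the reduction step in the nilpotent non-abelian case, where $\u_1$ is not a canonical invariant, and one must argue directly via the rank analysis above that every almost abelian presentation of $\g_1$ gives the same conjugacy class of $\ad$-operator up to scaling.
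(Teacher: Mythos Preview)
The paper does not give its own proof of this lemma; it simply cites the result from Freibert. There is therefore no ``paper's proof'' to compare against.

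That said, your argument is correct. The backward direction and the final paragraph (once $\varphi(\u_1)=\u_2$) are routine and carried out cleanly. The only genuinely delicate point is the nilpotent non-abelian case, where the codimension-one abelian ideal need not be unique, and you handle it correctly: the intersection $W=\u_1\cap\u_1'$ has codimension one in $\u_1$, and writing any $f_1'\in\u_1'\setminus W$ as $af_1+u$ with $a\neq 0$, abelianness of $\u_1'$ forces $\ad_{f_1}(W)=0$, whence $\ad_{f_1}|_{\u_1}$ has rank exactly one (rank zero would make $\g_1$ abelian). By symmetry the same holds for $\ad_{f_1'}|_{\u_1'}$, and two rank-one nilpotent endomorphisms on spaces of equal dimension are conjugate. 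Composing this conjugacy with the one obtained from $\varphi$ (applied to the presentation $\g_1=\R f_1'\ltimes\u_1'$, which $\varphi$ does respect) gives the desired relation between $\ad_{f_1}|_{\u_1}$ and a nonzero scalar multiple of $\ad_{f_2}|_{\u_2}$.

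One small presentational point: in the nilpotent case you should make explicit that if $\u_1'=\u_1$ you fall directly into the final paragraph, and only when $\u_1'\neq\u_1$ do you invoke the rank argument. The logic is there implicitly, but stating the dichotomy would make the proof read more smoothly.
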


\medskip

\begin{rem}
	Note that a codimension one abelian ideal of an almost abelian Lie algebra is almost always unique (see \cite{AO18}). 
%	Indeed, if $\u$ and $\vg$ are codimension one abelian ideals of $\g$, with $\u\neq 
%	\vg$, then $\u\cap\vg$ is a codimension two abelian ideal, and we can decompose $\g=\R u_0\oplus \R 
%	v_0 \oplus \u\cap \vg$ for some $u_0\in\u-\vg$, $v_0\in\vg-\u$. If $[u_0,v_0]=0$, then $\g$ is 
%	abelian, and if $[u_0,v_0]\neq 0$, then $[u_0,v_0]$ generates the commutator ideal of $\g$, and 
%	therefore $\g$ is isomorphic to $\h_3\times \R^s$ for some $s\geq 0$, where $\h_3$ denotes the 
%	$3$-dimensional Heisenberg Lie algebra. Therefore, except for $\h_3\times \R^s$, in any other case the codimension one abelian ideal is unique.
\end{rem}

\medskip

\subsubsection{Lattices in almost abelian Lie groups}
An important property of almost abelian Lie groups is that there exists a 
criterion to determine when such a Lie group admits lattices. In general, it is not easy to 
determine if a given Lie group $G$ admits a lattice. A well known restriction 
is that if this is the case then $G$ must be unimodular (\cite{Mi}), i.e. the Haar measure on $G$ 
is left and right invariant, or equivalently, when $G$ is connected, $\tr(\ad_x)=0$ for any $x$ in 
the Lie algebra $\g$ of $G$. In the case of an almost abelian Lie group we recall the following result.

\begin{prop}\label{latt}\cite{B}
	Let $G=\R\ltimes_\phi\R^d$ be an almost abelian Lie group. Then $G$ admits a lattice if and 
	only if there exists 
	$t_0\neq 0$ such that $\phi(t_0)$ can be conjugated to an integer matrix in $ \operatorname{SL} (d,\mathbb Z)$.

In this case, a lattice is given by $\Gamma=t_0\mathbb{Z}\ltimes P^{-1}\mathbb Z^d$, 
where $P\phi(t_0)P^{-1}$ is an integer matrix.
\end{prop}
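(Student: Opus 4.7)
The plan is to prove both implications, treating $G=\mathbb{R}\ltimes_\phi\mathbb{R}^d$ as $\mathbb{R}\times\mathbb{R}^d$ with multiplication $(s,v)(t,w)=(s+t,\,v+\phi(s)w)$.

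For the sufficient direction, I would start with the candidate $\Gamma=t_0\mathbb{Z}\ltimes P^{-1}\mathbb{Z}^d$ and verify three things. First, $\Gamma$ is a subgroup: using the product above, closure reduces to showing $\phi(t_0)^n P^{-1}\mathbb{Z}^d\subseteq P^{-1}\mathbb{Z}^d$ for all $n\in\mathbb{Z}$, equivalently $A^n\mathbb{Z}^d\subseteq\mathbb{Z}^d$ where $A=P\phi(t_0)P^{-1}$; this holds for $n\ge 0$ because $A$ has integer entries, and for $n<0$ because $A\in\operatorname{SL}(d,\mathbb{Z})$ implies $A^{-1}$ has integer entries as well. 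Second, $\Gamma$ is discrete: it is the image of the discrete set $t_0\mathbb{Z}\times P^{-1}\mathbb{Z}^d\subset\mathbb{R}\times\mathbb{R}^d$ under a diffeomorphism. Third, $G/\Gamma$ is compact: the natural projection $G\to\mathbb{R}/t_0\mathbb{Z}$ descends to a fiber bundle over the circle with typical fiber $\mathbb{R}^d/P^{-1}\mathbb{Z}^d$, which is a torus, so the total space is compact.

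For the necessary direction, assume $\Gamma\subset G$ is a lattice. Let $\pi\colon G\to\mathbb{R}$ be the projection onto the first factor; its image $\pi(\Gamma)$ is a subgroup of $\mathbb{R}$. The key observation is that $\pi(\Gamma)$ must be discrete and nontrivial: if it were dense, $\Gamma$ would have elements with arbitrarily small $\mathbb{R}$-component, contradicting discreteness in $G$ (since elements close to $\mathbb{R}^d$ must, by discreteness of $\Gamma\cap\mathbb{R}^d$ inside its span, eventually be trapped), and if it were trivial then $\Gamma\subset\mathbb{R}^d$, making the quotient noncompact in the $\mathbb{R}$-direction. So $\pi(\Gamma)=t_0\mathbb{Z}$ for some $t_0>0$. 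Next, the kernel $N=\Gamma\cap\mathbb{R}^d$ is a discrete subgroup of $\mathbb{R}^d$, and one argues it must have full rank $d$: otherwise its $\mathbb{R}$-span would be a proper subspace and the quotient $G/\Gamma$ would fail to be compact in the transverse directions. Thus $N$ is a genuine lattice in $\mathbb{R}^d$, so there is an invertible $P$ with $PN=\mathbb{Z}^d$.

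Now pick any $\gamma=(t_0,v_0)\in\Gamma$. Conjugation $\gamma\cdot(0,n)\cdot\gamma^{-1}=(0,\phi(t_0)n)$ stays in $\Gamma\cap\mathbb{R}^d=N$ for every $n\in N$, so $\phi(t_0)N\subseteq N$; applying the same argument with $\gamma^{-1}$ gives $\phi(-t_0)N\subseteq N$, hence $\phi(t_0)N=N$. Conjugating $N$ by $P$ shows that $A:=P\phi(t_0)P^{-1}$ preserves $\mathbb{Z}^d$ and has integer inverse, so $A\in\operatorname{GL}(d,\mathbb{Z})$. Finally, since $G$ admitting a lattice forces unimodularity of $G$, one has $\det\phi(t)=1$ for all $t\in\mathbb{R}$, whence $\det A=1$ and $A\in\operatorname{SL}(d,\mathbb{Z})$. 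The explicit description of $\Gamma$ then follows by checking that $\Gamma$ is generated by $N$ and a lift $(t_0,v_0)$, and that after an inner automorphism one may absorb $v_0$ into the $\mathbb{R}^d$-factor.

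The main obstacle in this plan is the careful justification that $\pi(\Gamma)$ is discrete and that $N$ has full rank $d$; both use the compactness of $G/\Gamma$ in a nontrivial way, typically via a volume/covolume comparison or by inspecting the exact sequence $0\to N\to\Gamma\to\pi(\Gamma)\to 0$ together with the fibration $G/\Gamma\to\mathbb{R}/\pi(\Gamma)$ with fiber $\mathbb{R}^d/N$, which forces both base and fiber to be compact.
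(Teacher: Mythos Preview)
The paper does not prove this proposition: it is quoted verbatim from Bock's paper~\cite{B} as a known result, with no argument supplied. So there is no ``paper's proof'' to compare against; your outline is essentially the standard argument one finds in the literature.

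Your sketch is sound in structure, and you correctly flag the one genuine difficulty. Two remarks. First, your initial attempt at showing $\pi(\Gamma)$ is discrete (``elements with arbitrarily small $\mathbb{R}$-component, contradicting discreteness in $G$'') does not work as stated: a sequence $(t_n,v_n)\in\Gamma$ with $t_n\to 0$ but $|v_n|\to\infty$ is perfectly compatible with discreteness of $\Gamma$. The correct route is exactly the fibration/exact-sequence argument you give at the end: cocompactness of $\Gamma$ forces $\Gamma\cap\mathbb{R}^d$ to be a lattice in $\mathbb{R}^d$ (this is where Mostow-type input or a direct volume argument is used), after which discreteness of $\pi(\Gamma)$ follows. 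Second, the appeal to unimodularity to get $\det A=1$ is unnecessary: since $\det\phi(t_0)=e^{t_0\operatorname{tr}M}>0$ and $A\in\operatorname{GL}(d,\mathbb{Z})$ already forces $\det A=\pm 1$, positivity gives $\det A=1$ directly.
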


\

\subsection{CKY $2$-forms}

On almost abelian Lie groups, CKY $p$-forms were analyzed only for $p=2$ in \cite{AD} and the main result states that only in dimension $3$ it is possible to find strict CKY 2-forms, otherwise the CKY $2$-form is parallel.
%\footnote{\color{blue}Agregar aca o en la intro que para 2-formas se dice tensor CKY}
\begin{teo}\label{almost-abelian-2-forms}\cite[Theorem 6.2]{AD} Let $\mathfrak{g}$ be an almost abelian Lie algebra equipped with an inner product and a CKY $2$-form $\omega$.
	\begin{enumerate}
		\item If $\theta  \neq 0$ then $\dim \mathfrak{g}=3$, then $\ggo$ is isomorphic to $\mathfrak{h}_3$ or to $\aff(\R)\times \R$.
		\item If $\theta  =0$ then $\omega$ is parallel.
	\end{enumerate}	
\end{teo}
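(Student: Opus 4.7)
The plan is to reduce the CKY equation \eqref{cky_lie_algebras} to a system of tensorial identities on the structure operator of $\g$ and on the components of $\omega$. First I would fix an orthogonal decomposition $\g = \R f_1 \oplus \u$ with $f_1$ a unit vector orthogonal to the codimension-one abelian ideal $\u$, and set $A = \ad_{f_1}|_{\u}$, with symmetric and skew parts $S = \tfrac{1}{2}(A+A^T)$ and $K = \tfrac{1}{2}(A - A^T)$. The Koszul formula \eqref{koszul} then gives
\[
\nabla_x y = \la Sx, y\ra\, f_1,\quad \nabla_{f_1}y = Ky,\quad \nabla_x f_1 = -Sx,\quad \nabla_{f_1}f_1 = 0
\]
for $x, y\in\u$. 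Any $2$-form decomposes orthogonally as $\omega = f_1^* \wedge \alpha + \beta$ with $\alpha \in \u^*$ (dual to some $a\in\u$) and $\beta \in \Lambda^2\u^*$ (associated to a skew operator $B$ on $\u$).

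Using the Chevalley--Eilenberg differential one computes $d\omega = -f_1^* \wedge \gamma$ with $\gamma(y,z) = \la (BA + A^TB)y, z\ra$, and an analogous explicit formula for the 1-form $d^*\omega$ in terms of $a, B, A, \tr A$. Comparing \eqref{cky_lie_algebras} with \eqref{Tachibanaequationoriginal} identifies the 1-form $\theta$ as a nonzero multiple of $d^*\omega$, and I would split $\theta = c f_1^* + \theta_\u^*$ with $\theta_\u \in \u$. Substituting everything into \eqref{cky_lie_algebras} and evaluating on the pairs $(f_1, y)$, $(y, z)$, $(y, f_1)$ with $y, z \in \u$ produces a coupled system of identities in $(A, a, B, c, \theta_\u)$.

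For part (2), when $\theta = 0$ the resulting Killing equations force $Ka = 0$, $[B, S] = 0$, $[K, B] + BS = 0$, and (if $a \ne 0$) $S = c\, a \otimes a$ for some $c \in \R$. If $a \ne 0$, the commutation $[B, S] = 0$ with $S$ of rank one forces $Ba = 0$, whence $BS = 0$. If $a = 0$, decomposing $\u$ into eigenspaces of $S$ and using that $\ad_K$ on $\mathfrak{so}(\u)$ has purely imaginary spectrum forces $B$ to be supported on $\ker S$, so again $BS = 0$. Either way $[K, B] = 0$, and a direct check of the remaining components gives $\nabla\omega = 0$, so $\omega$ is parallel.

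The main obstacle is part (1), $\theta \neq 0$. Here the identities force $\theta_\u$ to be proportional to $Sa$, require $B$ to be supported on a distinguished two-dimensional subspace of $\u$, and pin down $S$ to be scalar on the orthogonal complement of that two-plane. On that complement, $S$ and $K$ then commute with $A$, carry no component of $\alpha$ or $\beta$, and contribute only parallel terms to $\omega$; since the codimension-one abelian ideal of an almost abelian Lie algebra is essentially unique (cf.\ the remark following Lemma~\ref{ad-conjugated}), no nontrivial parallel abelian summand can split off, forcing $\dim\u = 2$ and hence $\dim\g = 3$. A brief conjugacy classification up to scaling of those $A \in \operatorname{End}(\R^2)$ admitting a strict CKY $2$-form (invoking Lemma~\ref{ad-conjugated}) finally leaves exactly the nilpotent rank-one case $A = \left(\begin{smallmatrix} 0 & 1\\ 0 & 0 \end{smallmatrix}\right)$, giving $\g \cong \h_3$, and the diagonalizable rank-one case $A = \left(\begin{smallmatrix} 1 & 0\\ 0 & 0\end{smallmatrix}\right)$, giving $\g \cong \aff(\R)\times\R$, concluding part (1).
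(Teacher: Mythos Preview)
This theorem is not proved in the present paper: it is quoted verbatim from \cite[Theorem~6.2]{AD}. The paper develops its own machinery for $p$-forms (Propositions~\ref{w parallel}, \ref{Killing-parallel}, Lemma~\ref{lema oscuro}, Theorem~\ref{Killing-parallel_u}) and explicitly defers the $p=2$ Killing case to this citation in the proof of Theorem~\ref{Killing-parallel_u}. So there is no ``paper's own proof'' to compare against; your proposal is a direct attack on the cited result.

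Your outline for part~(2) is essentially sound. The derived equations $[B,S]=0$ and $[K,B]+BS=0$ are correct, and the spectral step can be made rigorous: since $[B,S]=0$ makes $B$ block-diagonal along the $S$-eigenspace decomposition $\u=\bigoplus_\lambda \u_\lambda$, the $(\lambda,\lambda)$-block of $[K,B]=-BS$ reads $[K_{\lambda\lambda},B_\lambda]=-\lambda B_\lambda$ inside $\mathfrak{so}(\u_\lambda)$, and the skewness of $\ad_{K_{\lambda\lambda}}$ forces $B_\lambda=0$ for $\lambda\neq 0$. From there $BS=0$, $[K,B]=0$, and parallelism follows as you say.

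Part~(1) has a genuine gap. Your reduction to a two-plane in $\u$ and the claim that ``$S$ and $K$ contribute only parallel terms'' on the complement may well be true, but the step ``since the codimension-one abelian ideal is essentially unique, no nontrivial parallel abelian summand can split off, forcing $\dim\u=2$'' is invalid. Uniqueness of the abelian ideal does not preclude almost abelian Lie algebras of the form $\g_3\times\R^k$ with $k\geq 1$ (e.g.\ $\h_3\times\R^k$), and nothing in your argument rules out a strict CKY $2$-form supported on a $3$-dimensional piece of such a $\g$. The obstruction is analytic, not structural: the coefficient $\tfrac{1}{n-p+1}=\tfrac{1}{n-1}$ in \eqref{cky_lie_algebras} depends on $n=\dim\g$, so a $2$-form satisfying the CKY equation on a $3$-dimensional subalgebra will in general fail it on the ambient $\g$. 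You must feed the explicit $n$-dependence of the $\theta$-term back into your system of identities and show that $\theta\neq 0$ is incompatible with $n\geq 4$; the ideal-uniqueness remark does not do this for you.
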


Moreover, in \cite{ABD} the authors classify metrics Lie algebras in dimension $3$ admitting CKY $2$-forms. We recall that result here.
\begin{teo}\cite[Theorem 5.1]{ABD}\label{metricas en dim 3}
	\begin{enumerate}
		\item Any inner product on $\mathfrak{h}_3$ admits a CKY $2$-form and any of these $2$-forms is strict.
		\item  Any inner product on $\aff(\R)\times \R$ admits a CKY $2$-form. Up to scaling, each of these metrics is isometric to $g_{1,t}$ for one and only one $t\geq 0$. For $t>0$, the CKY $2$-form is strict, while for $t=0$ it is parallel (see Table \ref{tabla_dim3}).
	\end{enumerate}	
	
	{\scriptsize
		\begin{table}[h!]
			\begin{tabular}{|c|c|c|c|c|}
				\hline
				\hline
				Lie algebra  & Lie bracket & metric & $\omega$ & {\scriptsize CKY, KY or P} \\ \hline \hline
				
				$\mathfrak{h}_3$& $[f_1,f_2]=f_3$&  $g_q=\left(\begin{array}{ccc} 1&0&0\\0&1&0\\0&0&q^2\end{array}\right)$, $q>0$& $f^1\wedge f^2$&CKY 
				\\ \hline
				
				\multirow{3}{*}{$\mathfrak{aff}(\mathbb{R})\times\R	$} &  \multirow{3}{*}{$[f_1,f_2]=f_2 $}&	 \multirow{3}{*}{$g_{1,t}=\left(\begin{array}{ccc} 1&0&0\\0&1+t^2&t\\0&t&1\end{array}\right)$, $t\geq 0$} &  \multirow{3}{*}{$f^1\wedge f^2$} & \multirow{3}{*}{ P if $t=0$}\\ 
				&&&&\\
				& & & &   CKY  if $t \neq 0$\\ 
				\hline			
			\end{tabular}
			\caption{CKY $2$-forms on almost abelian 3-dimensional metric Lie algebras}
			\label{tabla_dim3}
	\end{table}}
\end{teo}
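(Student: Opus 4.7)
The theorem has two components: a reduction of an arbitrary inner product on $\mathfrak{h}_3$ or $\aff(\R)\times\R$ to a normal form in Table \ref{tabla_dim3}, and the verification that the listed $2$-form satisfies \eqref{cky_lie_algebras} with the stated strict-versus-parallel behaviour. I would organize the proof in three blocks.

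For the normalization on $\mathfrak{h}_3$, I would exploit the fact that the derived ideal $[\mathfrak{h}_3,\mathfrak{h}_3]=\R f_3$ is a canonical line. Picking a unit vector $e_3$ spanning it and completing to an oriented orthonormal basis $(e_1,e_2,e_3)$, one obtains $[e_1,e_2]=q\,e_3$ for a unique $q>0$; identifying $e_1,e_2$ with $f_1,f_2$ and $e_3$ with $q^{-1}f_3$ recovers the metric $g_q$, with $q$ visibly an isometric invariant. For $\aff(\R)\times\R$, both the centre $\R f_3$ and the derived ideal $\R f_2$ are canonical $1$-dimensional subspaces; diagonalizing the inner product with respect to them and using the automorphisms $f_1\mapsto \pm f_1+cf_2$, $f_2\mapsto \lambda f_2$, together with a global rescaling, reduces the metric to $g_{1,t}$ for a unique $t\geq 0$.

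Once the normal forms are fixed, I would introduce an orthonormal basis adapted to each: take $e_3=q^{-1}f_3$ (other vectors unchanged) on $\mathfrak{h}_3$, so that $[e_1,e_2]=q\,e_3$; on $\aff(\R)\times\R$ take $e_1=f_1$, $e_2=f_2-tf_3$, $e_3=f_3$, so that $[e_1,e_2]=e_2+t\,e_3$. The Koszul formula \eqref{koszul} then gives the Levi-Civita connection as short explicit skew endomorphisms. In both bases the candidate form $f^1\wedge f^2$ coincides with $e^1\wedge e^2$, and a direct computation of $d\omega$, $d^*\omega$ and $\nabla_{e_i}\omega$ reduces the CKY equation (with $p=2$, $n=3$) to a handful of scalar identities, all of which hold. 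Running the same calculation for a general $a\,e^1\wedge e^2+b\,e^1\wedge e^3+c\,e^2\wedge e^3$ shows that the nontrivial CKY solutions are precisely the multiples of $e^1\wedge e^2$, confirming consistency with Theorem \ref{almost-abelian-2-forms}.

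The same computation settles the strict-versus-parallel dichotomy. On $\mathfrak{h}_3$ one finds $d(e^1\wedge e^2)=0$ and $d^*(e^1\wedge e^2)$ a nonzero multiple of $e^3$, so $\omega$ is strict (in fact $*$-Killing) for every $q>0$. On $\aff(\R)\times\R$ with $g_{1,0}$ the algebra splits as a Riemannian product $\aff(\R)\oplus\R$, and $\omega$ is the pullback of the volume form on the $\aff(\R)$-factor, which is automatically parallel; for $t>0$, the extra term $t\,e_3$ in $[e_1,e_2]$ produces a nonzero contribution to $d^*\omega$, forcing $\omega$ to be strict. The main obstacle is the normalization step, where one must carefully establish both the exhaustiveness and the uniqueness of the one-parameter families $g_q$ and $g_{1,t}$; once those are in hand, the CKY verification is a routine three-dimensional exercise.
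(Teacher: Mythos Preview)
The paper does not prove this theorem: it is quoted verbatim as \cite[Theorem 5.1]{ABD} and stated without argument, serving only as background for the later study of $p$-forms with $p\geq 2$. There is therefore nothing in the paper to compare your proposal against.

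That said, your outline is a sound and self-contained proof. The normalization step via the canonical flags (derived ideal, centre) followed by a direct Koszul computation in an adapted orthonormal basis is exactly the natural route, and your identification $f^1\wedge f^2=e^1\wedge e^2$ together with the observation that $*\omega=e^3$ is the dual of a central (hence Killing) vector makes the CKY verification and the strict/parallel dichotomy transparent. One small inaccuracy: for $\aff(\R)\times\R$ the map $f_1\mapsto -f_1+cf_2$ is \emph{not} a Lie algebra automorphism, since $[\phi f_1,\phi f_2]=-\phi f_2$; the sign on $f_1$ must stay $+1$, and the freedom you actually have is $f_1\mapsto f_1+cf_2+df_3$, $f_2\mapsto\lambda f_2$, $f_3\mapsto\mu f_3$. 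This does not affect the reduction to $g_{1,t}$, but you should correct the description. Apart from that detail, the argument is complete.
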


\begin{rem}
	According to \cite{ABD} all these 2-forms are closed and in the case where $\omega$ is strict CKY, their Hodge dual are contact forms. %\footnote{Vale para n-1? }
\end{rem}

\subsection{$p$-forms on almost abelian Lie algebras}

In this subsection we introduce some useful notation and prove general results about $p$-forms (not necessarily CKY) on an almost abelian Lie algebra.
Let $\mathfrak{g}$ be an almost abelian Lie algebra equipped with an inner product $\pint$. If $\u$ denotes the codimension one abelian ideal in $\g$, let us choose $e_0 \in \g$ with $|e_0| = 1$
orthogonal to $\u$, so that we can decompose $\g$ orthogonally as
$\g=\R e_0\ltimes\u =\R e_0 \ltimes_M \R^{k}$ with $M =\ad_{e_0}|_\u$.
We decompose $M$ into $M=S+A$ where $S:\u\to\u$ and $A:\u\to\u$ denote its symmetric and skew-symmetric components with respect to $\pint$, respectively.

The Levi-Civita connection on $\g$ associated to $\pint$ is determined in \cite{Mi}:
\begin{equation}\label{LC_AlmostAbelian}
	\nabla_{e_0}e_0=0, \quad	\nabla_{e_0}u=Au, \quad	\nabla_ue_0=-Su, \quad	\nabla_uv=\la Su,v\ra e_0, 
\end{equation}
where $u,v\in\u$. 
Since $\Lambda^p\g^*=e^0\wedge\Lambda^{p-1}\u^*\oplus\Lambda^p\u^*$, then any $p$-form on $\g$ decomposes as
\begin{equation}\label{descomposicion w}
	\omega=e^0\wedge\alpha+\beta
\end{equation}
where $\alpha\in\Lambda^{p-1}\u^*, \beta\in\Lambda^p\u^*$ and $e^0$ denotes the dual $1$-form of $e_0$. In general, we will use $v^*$ to denote the dual $1$-form of $v\in\g$.

\

Let $B:\g\to\g$ be a Lie algebra morphism, then we denote by $B^*:\Lambda^p\g^*\to\Lambda^p\g^*$ the endomorphism given by
%\begin{eqnarray}\label{A*}
%	M^*\omega(x_1,\dots,x_p)=\omega(Mx_1,x_2,\dots,x_p)+\cdots+\omega(x_1,\dots,Mx_i,\dots,x_p)+&\cdots\\ \nonumber
%	\cdots+\omega(x_1,\dots,x_{p-1},Mx_p).
%\end{eqnarray} 
\begin{equation}\label{A*}
	B^*\omega(x_1,\dots,x_p)=\sum_{i=1}^p \omega(x_1,\dots,Bx_i,\dots,x_p),
\end{equation}
for $x_i\in\g$.
Next we have this technical result which will be useful later.

\begin{lema}\label{A antisimetrico}
	If $B:(\g,\pint)\to(\g,\pint)$ is a skew-symmetric Lie algebra morphism, then $B^*:\Lambda^p\g^*\to\Lambda^p\g^*$ given by \eqref{A*} is skew-symmetric with respect to the inner product in  $\Lambda^p\g^*$ induced by $\pint$.%\footnote{\color{blue}Se usa para algo que M es un morfismo de álgebras? O vale en general para endomorfismos de espacios producto interno? Ver al final si lo dejamos.}
\end{lema}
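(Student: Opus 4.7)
The plan is to establish two structural properties of $B^*$ and then combine them. First, I would show that $B^*$ acts as a degree-zero derivation of the exterior algebra $(\Lambda^\bullet\g^*,\wedge)$, meaning that for $\alpha\in\Lambda^k\g^*$ and $\beta\in\Lambda^l\g^*$,
$$
B^*(\alpha\wedge\beta)=(B^*\alpha)\wedge\beta+\alpha\wedge(B^*\beta).
$$
This is a direct combinatorial consequence of the defining formula \eqref{A*}: expanding $\alpha\wedge\beta$ as an antisymmetrized tensor and substituting $Bx_i$ in the $i$-th slot, the sum over $i$ splits into the two contributions according to whether the $i$-th argument is consumed by $\alpha$ or by $\beta$.

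Second, I would check that $B^*$ restricted to $\Lambda^1\g^*$ is skew-symmetric. For $v,w\in\g$ with dual $1$-forms $v^*,w^*$,
$$
\la B^*v^*,w^*\ra=v^*(Bw)=\la v,Bw\ra=-\la Bv,w\ra=-\la v^*,B^*w^*\ra,
$$
where the key step uses the hypothesis that $B$ is skew-symmetric with respect to $\pint$ (the Lie algebra morphism assumption plays no role in this lemma). Note that this already identifies $B^*|_{\Lambda^1\g^*}$ with the endomorphism $-B$ transferred to $\g^*$ via the musical isomorphism.

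To conclude, pick an orthonormal basis $\{e_1,\dots,e_n\}$ of $\g$ so that the family $\{e^{i_1}\wedge\cdots\wedge e^{i_p}:i_1<\cdots<i_p\}$ is orthonormal in $\Lambda^p\g^*$. By the derivation property,
$$
B^*(e^{i_1}\wedge\cdots\wedge e^{i_p})=\sum_{j=1}^{p} e^{i_1}\wedge\cdots\wedge (B^*e^{i_j})\wedge\cdots\wedge e^{i_p},
$$
and expanding $\la B^*(e^{i_1}\wedge\cdots\wedge e^{i_p}),\, e^{k_1}\wedge\cdots\wedge e^{k_p}\ra$ via the Gram-determinant formula for the induced inner product, the contributions pair with those of $\la e^{i_1}\wedge\cdots\wedge e^{i_p},\, B^*(e^{k_1}\wedge\cdots\wedge e^{k_p})\ra$ and cancel thanks to skew-symmetry on $\Lambda^1\g^*$.

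The main obstacle—such as it is—is keeping track of signs in the determinantal expansion. I would likely finesse this by the following conceptual argument: since $B^*$ is a derivation on $\Lambda^\bullet\g^*$, the exponential $e^{tB^*}$ is an algebra automorphism; on $\Lambda^1\g^*$ it is orthogonal (because $B^*$ is skew there), and since the inner product on $\Lambda^p\g^*$ is given by determinants of inner products on $\Lambda^1\g^*$, the extension $e^{tB^*}$ is orthogonal on all of $\Lambda^p\g^*$. Differentiating at $t=0$ then yields the claim without any sign bookkeeping.
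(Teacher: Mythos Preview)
Your proof is correct and takes a genuinely different route from the paper's. The paper proceeds by a direct matrix computation: it fixes an orthonormal basis, writes the $(s,t)$-entry of $B^*$ on $\Lambda^p\g^*$ as a sum of $p$ determinants (one for each slot in which $B$ is inserted), and then observes that interchanging the multi-indices $s$ and $t$ amounts to transposing each determinant and using $B_{ij}=-B_{ji}$, whence $[B^*]_{s,t}=-[B^*]_{t,s}$. Your argument instead isolates the two structural facts---$B^*$ is a derivation of $\Lambda^\bullet\g^*$ and is skew on $\Lambda^1\g^*$---and then exponentiates: $e^{tB^*}$ is the algebra automorphism extending the orthogonal map $e^{tB^*}|_{\Lambda^1\g^*}$, hence orthogonal on each $\Lambda^p\g^*$ by the Gram-determinant description of the induced inner product, and differentiation at $t=0$ gives the skew-symmetry. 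The paper's approach is self-contained and elementary but requires some care with the determinantal signs; yours bypasses all sign bookkeeping at the price of invoking the (standard) fact that the exponential of a derivation is an algebra automorphism. Your remark that the Lie-algebra-morphism hypothesis on $B$ is irrelevant is also correct and worth keeping: only skew-symmetry of $B$ is used, in either proof.
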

\begin{proof}
	Suppose $\{e_i\}_{i=1}^n$ is an orthonormal basis of $\g$ and $\{e^i\}_{i=1}^n$ the dual basis on $\g^*$. The vector space $\bigwedge^p \g$ has an inherited basis given by $\sigma=\{e_s=e_{s_1}\wedge e_{s_2}\wedge\cdots \wedge e_{s_p}: s=(s_1,\dots,s_p), s_i\in \{1,2,\cdots,n\}, s_1<s_2<\dots <s_p\}$. It is known that 
	$\left( \bigwedge ^p \g\right) ^*$ is isomorphic to the vector space of alternating multilinear functions on  $\g\times \g\times \cdots\times \g$ ($p$ copies).
	We define this isomorphism by
	$e^s\longrightarrow  \omega_s(x_1,\cdots,x_p)=\det\begin{pmatrix}
	e^{s_1}(x_1)\cdots e^{s_1}(x_p)\\
	\vdots\\
	e^{s_p}(x_1)\cdots e^{s_p}(x_p)
	\end{pmatrix},$
	and then any $p$-form $\omega$ can write as $\omega=\displaystyle \sum_{s}\omega(e_s)\omega_s$, therefore
	$\displaystyle	B^*w=\sum_{s}(B^*w)(e_s)w_s.$
	We compute now the coordinates of $[B^*]_\sigma$, that is 
	\begin{align*}
	[B^*]_{s,t}&=(B^*w_t)(e_s)=w_t(Be_{s_1},e_2,\cdots ,e_{s_p})+\cdots +w_t(e_{s_1},e_2,\cdots ,Be_{s_p})\\
	&=\det\begin{pmatrix}
	e^{t_1}(Be_{s_1})\cdots e^{t_1}(e_{s_p})\\
	\vdots\\
	e^{t_p}(Be_{s_1})\cdots e^{t_p}(e_{s_p})
	\end{pmatrix}+\cdots +\det\begin{pmatrix}
	e^{t_1}(e_{s_1})\cdots e^{t_1}(Be_{s_p})\\
	\vdots\\
	e^{t_p}(e_{s_1})\cdots e^{t_p}(Be_{s_p})
	\end{pmatrix}\\
	&=\det\begin{pmatrix}
	B_{t_1\,s_1}\cdots e^{t_1}(e_{s_p})\\
	\vdots\\
	B_{t_p\,s_1}\cdots e^{t_p}(e_{s_p})
	\end{pmatrix}+\cdots +\det\begin{pmatrix}
	e^{t_1}(e_{s_1})\cdots B_{t_1\,e_{s_p}}\\
	\vdots\\
	e^{t_p}(e_{s_1})\cdots B_{t_p\,s_p},
	\end{pmatrix}
	\end{align*}
	where $B_{i\,j}=\langle Be_i, e_j\rangle$. Now using that $B_{ij}=-B_{ji}$, we have that $[B^*]_{s,t}=-[B^*]_{t,s}$.
\end{proof}
Now we consider  $A^*:\Lambda^p\g^*\to\Lambda^p\g^*$ given by \eqref{A*} where $A$ is the skew-symmetric part of $\ad_{e_0}|_\u$ with respect to the given inner product $\pint$ on $\g$.
It is easy to see from \eqref{LC_AlmostAbelian} that $\nabla_{e_0}e^0=0$ and $\nabla_{u}e^0=-(Su)^*\in\u^*$. On the other hand, we compute $\nabla_{x}\beta$ for $\beta\in\Lambda^p\u^*$ and $x\in\g$ and we obtain the following result, which will be useful in the next sections. 

\begin{lema}\label{lema_util_beta}
	Let $\beta\in\Lambda^p\u^*$ for $p\geq 1$, then 
	\begin{enumerate}
		\item $\nabla_{e_0}\beta=-A^*\beta \in\Lambda^p\u^*$.
		\item $\nabla_u\beta=e^0\wedge Su\iprod\beta\in e^0\wedge \Lambda^{p-1}\u^*$ for any $u\in\u$.
	\end{enumerate}
\end{lema}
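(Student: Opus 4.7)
The plan is to use the derivation property of the Levi-Civita connection on forms together with the explicit Koszul formulas \eqref{LC_AlmostAbelian}. For any left-invariant $p$-form $\omega$ and left-invariant vector fields $y_1,\dots,y_p$, the value $\omega(y_1,\dots,y_p)$ is a constant function on the group, so the derivation property reduces to
\begin{equation*}
(\nabla_x\omega)(y_1,\dots,y_p)=-\sum_{i=1}^p \omega(y_1,\dots,\nabla_x y_i,\dots,y_p).
\end{equation*}
I would then evaluate both sides of the two identities on tuples $(y_1,\dots,y_p)$ taken from the adapted decomposition $\g=\R e_0\oplus\u$, splitting into cases according to how many of the $y_i$ equal $e_0$.

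For part (1), I take $x=e_0$. If all $y_i\in\u$, then \eqref{LC_AlmostAbelian} gives $\nabla_{e_0}y_i=Ay_i\in\u$; substituting in the display above yields exactly $-(A^*\beta)(y_1,\dots,y_p)$ by the definition \eqref{A*} of $A^*$. If some $y_j=e_0$, both sides vanish: the left side because $\nabla_{e_0}e_0=0$ while the remaining $\nabla_{e_0}y_i\in\u$ leave $e_0$ sitting in another slot of $\beta\in\Lambda^p\u^*$. This simultaneously shows that $\nabla_{e_0}\beta$ still lies in $\Lambda^p\u^*$.

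For part (2), I take $x=u\in\u$. If all $y_i\in\u$, then $\nabla_u y_i=\la Su,y_i\ra e_0$, so every summand $\beta(\dots,e_0,\dots)$ vanishes; on the other hand $(e^0\wedge (Su\iprod\beta))$ evaluated on $\u$-arguments is zero because $e^0$ annihilates $\u$. If exactly one argument equals $e_0$, say $y_1=e_0$, only the connection term $\nabla_u e_0=-Su\in\u$ survives (the remaining $\nabla_u y_i$ place an $e_0$ in a slot already containing another $e_0$-contribution, hence give zero against $\beta$), producing $\beta(Su,y_2,\dots,y_p)=(Su\iprod\beta)(y_2,\dots,y_p)$, which matches the value of $e^0\wedge (Su\iprod\beta)$ on $(e_0,y_2,\dots,y_p)$. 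The case of two or more $e_0$'s is zero on both sides by antisymmetry. This also confirms $\nabla_u\beta\in e^0\wedge\Lambda^{p-1}\u^*$.

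There is no real obstacle here; the computation is routine. The only mild care required is bookkeeping the sign in the derivation formula and the normalization convention $(e^0\wedge\alpha)(e_0,y_2,\dots,y_p)=\alpha(y_2,\dots,y_p)$ for $y_i\in\u$, plus verifying that mixed terms in the second case truly vanish because $\beta$ only accepts $\u$-arguments.
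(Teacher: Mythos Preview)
Your proposal is correct and follows essentially the same approach as the paper: both use the derivation formula $(\nabla_x\omega)(y_1,\dots,y_p)=-\sum_i\omega(y_1,\dots,\nabla_x y_i,\dots,y_p)$ together with the explicit connection values \eqref{LC_AlmostAbelian}, and then split into cases according to whether the arguments lie in $\u$ or equal $e_0$. Your write-up is slightly more explicit in handling the subcases (all arguments in $\u$, one equal to $e_0$, two or more equal to $e_0$), but the argument is the same.
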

\begin{proof}
	Let $x_1,\dots,x_p\in\g$, then $\nabla_{e_0}\beta(x_1,\dots,x_p)=-\sum_{i=1}^{p}\beta(x_1,\dots,\nabla_{e_0}x_i,\dots,x_p)$.
	It follows from \eqref{LC_AlmostAbelian} that $\nabla_{e_0}\beta$ vanishes if $x_i=e_0$ for some $i$, therefore $\nabla_{e_0}\beta \in\Lambda^p\u^*$.
	Now we can assume $x_i\in u$ for all $i$, since $\nabla_{e_0}x_i=Ax_i$ then we have  
	$$\nabla_{e_0}\beta(x_1,\dots,x_p)=-\sum_{i=1}^{p}\beta(x_1,\dots,Ax_i,\dots,x_p)=-A^*\beta(x_1,\dots,x_p),$$ we obtain the first part.
	
	For the second part, let $u\in\u$, then $\nabla_{u}\beta(x_1,\dots,x_p)=-\sum_{i=1}^{p}\beta(x_1,\dots,\nabla_{u}x_i,\dots,x_p)$. Since $\beta\in\Lambda^p\u^*$, then we only need to consider the case when $x_1=e_0$ and $x_i\in\u$ for $i>1$. Therefore, $\nabla_{u}\beta(e_0,\dots,x_p)=\beta(Su,x_2,\dots,x_p)$ with $x_i\in\u$ for $i>1$, that is, $\nabla_u\beta=e^0\wedge Su\iprod\beta$. In particular, $\nabla_u\beta\in e^0\wedge \Lambda^{p-1}\u^*$.
\end{proof}

To close this section we have two observations concerning the differential and co-differential operators applied to a $p$-form defined on $\u$. First, we consider 
$\displaystyle{\u=\bigoplus_{\lambda\in L} \u_\lambda}$ where $\u_\lambda$ is the $\lambda$-eigenspace and $L$ denotes the spectrum of $S$,  since $S$ is a symmetric operator on $\u$. 

\begin{lema} \label{d y d*}
	Given a $p$-form  $\eta\in\Lambda^p\u^*$ then
	\begin{enumerate}
		\item $d\eta\in e^0\wedge \Lambda^{p}\u^*$, in particular $d(e^0\wedge\eta)=0$,
		\item $\eta$ is co-closed in $\g$, that is, $d^*\eta=0$. 
	\end{enumerate} 
\end{lema}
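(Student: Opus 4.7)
The plan is straightforward: part (1) follows from the Chevalley–Eilenberg formula for the exterior derivative on a Lie algebra, using that $\u$ is abelian, while part (2) follows from the standard formula $d^*\eta=-\sum_i e_i\iprod\nabla_{e_i}\eta$ combined with Lemma \ref{lema_util_beta}.

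For (1), I would evaluate $d\eta$ on $p+1$ vectors of $\u$ using Chevalley–Eilenberg: every term contains a bracket $[u_i,u_j]$ with $u_i,u_j\in\u$, which is zero. Hence $d\eta$ vanishes on $\Lambda^{p+1}\u$, and so $d\eta\in e^0\wedge\Lambda^p\u^*$. A short computation using $[e_0,u]=Mu\in\u$ actually yields the explicit formula $d\eta=-e^0\wedge M^*\eta$, where $M^*$ is defined as in \eqref{A*}. For the second assertion of (1), I first check that $de^0=0$: on $\u\times\u$ this is $-e^0([\cdot,\cdot])=0$ since $\u$ is abelian, and on $\{e_0\}\times\u$ it is $-e^0(Mu)=0$ since $Mu\in\u$. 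Therefore $d(e^0\wedge\eta)=de^0\wedge\eta-e^0\wedge d\eta=-e^0\wedge d\eta$, which vanishes because $d\eta$ already contains the factor $e^0$.

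For (2), choose an orthonormal basis $\{u_1,\dots,u_k\}$ of $\u$ diagonalizing the symmetric operator $S$, with $Su_i=\lambda_i u_i$; together with $e_0$ this gives an orthonormal basis of $\g$. Apply $d^*\eta=-\sum_i e_i\iprod\nabla_{e_i}\eta$ and split off the $e_0$-term. By Lemma \ref{lema_util_beta}, $\nabla_{e_0}\eta=-A^*\eta\in\Lambda^p\u^*$, so $e_0\iprod\nabla_{e_0}\eta=0$ because $A^*\eta$ has no $e^0$ factor. For each $i$, Lemma \ref{lema_util_beta} gives
\[
\nabla_{u_i}\eta=e^0\wedge(Su_i\iprod\eta)=\lambda_i\,e^0\wedge(u_i\iprod\eta),
\]
so using $u_i\iprod e^0=0$ and $u_i\iprod u_i\iprod=0$,
\[
u_i\iprod\nabla_{u_i}\eta=-\lambda_i\,e^0\wedge(u_i\iprod u_i\iprod\eta)=0.
\]
Summing yields $d^*\eta=0$.

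The proof is essentially mechanical once Lemma \ref{lema_util_beta} is in hand; there is no real obstacle, only a little care with signs in the Chevalley–Eilenberg computation of $d\eta$, and the convenience of choosing an $S$-eigenbasis of $\u$ to make each $u_i$-term in $d^*\eta$ manifestly vanish. (Alternatively one can avoid the eigenbasis and use that $\sum_{i,j}S_{ij}\eta(u_j,u_i,\cdot)=0$ by combining the symmetry of $S$ with the antisymmetry of $\eta$.)
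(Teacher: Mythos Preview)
Your proof is correct and follows essentially the same approach as the paper: part (1) via the Chevalley--Eilenberg formula using that $\u$ is abelian, and part (2) via $d^*\eta=-\sum_i e_i\iprod\nabla_{e_i}\eta$, Lemma \ref{lema_util_beta}, and an $S$-eigenbasis to kill each $u_i$-term. Your treatment is slightly more detailed (explicitly computing $d\eta=-e^0\wedge M^*\eta$ and verifying $de^0=0$), and your argument for (2) works uniformly in $p$ whereas the paper separates out the case $p=1$, but these are cosmetic differences.
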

\begin{proof}
	The first part follows from the well known formula for the exterior derivative of a left-invariant form, that is,  $d\eta(x_0,x_1,\dots,x_p)=\displaystyle\sum_{j>i}(-1)^{i+j}\eta([x_i,x_j],x_0,\dots,\hat x_i,\dots, \hat x_j,\dots,x_p)$ for all $x_i\in\g$.
	For the second part, recall the formula of the co-differential operator in terms of the Levi-Civita connection
	$d^*\eta=-\sum_i e_i\iprod\nabla_{e_i}\eta,$ for $\{e_i\}$ an orthonormal basis. From Lemma \ref{lema_util_beta} we have that $e_0\iprod\nabla_{e_0}\eta=-e_0\iprod A^*\eta=0$ and $e_i\iprod\nabla_{e_i}\eta=e_i\iprod(e^0\wedge Se_i\iprod\eta)$. % for 	all $e_i\in\u$, since $S$ is symmetric.
	%all $e_i\in\u_\lambda$ and $\lambda\in L$, since $e_i\iprod Se_i\iprod\eta=e_i\iprod \lambda e_i\iprod\eta=0.$
	Note that  for 	all $e_i\in\u$, $e_i\iprod(e^0\wedge Se_i\iprod\eta)$ clearly vanishes if $p=1$. Otherwise, if $p>1$ we consider the orthonormal basis $\{e_i\}$ adapted to the decomposition $\displaystyle{\u=\bigoplus_{\lambda\in L} \u_\lambda}$, therefore,  for all $e_i\in\u_\lambda$ and all $\lambda\in L$ we have that $e_i\iprod Se_i\iprod\eta=e_i\iprod \lambda e_i\iprod\eta=0.$ %\color{blue}Hace falta aclarar mas esto?
\end{proof}

\

\section{CKY $p$-forms on almost abelian Lie algebras}

In this section we study CKY $p$-forms on a almost abelian Lie algebras $\g$. We start by analyzing parallel forms on $\g$, then we focus on Killing forms, and finally we consider CKY $p$-forms. Recall $\omega=e^0\wedge\alpha+\beta$ as in \eqref{descomposicion w}.

\begin{prop}\label{w parallel}
A $p$-form $\omega=e^0\wedge\alpha+\beta$ with $\alpha\in\Lambda^{p-1}\u^*, \beta\in\Lambda^p\u^*$ is parallel if and only if $e^0\wedge\alpha$ and $\beta$ are parallel. Moreover, 
\begin{enumerate}
	\item for $p\geq 1$, $\beta\in\Lambda^p\u^*$ is parallel iff $A^*\beta=0$ and $\beta\in\Lambda^p(\Ker S)^*$
	\item for $p> 1$, $e^0\wedge\alpha$ is parallel iff $A^*\alpha=0$ and $\alpha\in\Lambda^*(\Ker S)^*\wedge\eta$ where $\eta$ is the volume form of $\Im S\subset \u$.
	\item $e^0$ is parallel iff $S=0$. 
\end{enumerate}
\end{prop}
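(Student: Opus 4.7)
The plan is to expand $\nabla_x\omega$ explicitly in the two cases $x=e_0$ and $x\in\u$, using the Levi-Civita formulas \eqref{LC_AlmostAbelian}, the Leibniz rule on forms, and Lemma \ref{lema_util_beta}. For $\omega=e^0\wedge\alpha+\beta$ with $\alpha\in\Lambda^{p-1}\u^*$ and $\beta\in\Lambda^p\u^*$, I expect the computation to yield
\begin{align*}
\nabla_{e_0}\omega &= -e^0\wedge A^*\alpha - A^*\beta,\\
\nabla_u\omega &= -(Su)^*\wedge\alpha + e^0\wedge (Su\iprod\beta),
\end{align*}
using $\nabla_u e^0=-(Su)^*$ together with the observation that $e^0\wedge\nabla_u\alpha=0$ since $\nabla_u\alpha\in e^0\wedge\Lambda^{p-2}\u^*$ by Lemma \ref{lema_util_beta}. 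The structural remark that drives the whole proposition is that $-e^0\wedge A^*\alpha$ and $e^0\wedge(Su\iprod\beta)$ lie in $e^0\wedge\Lambda^{p-1}\u^*$, while $-A^*\beta$ and $-(Su)^*\wedge\alpha$ lie in $\Lambda^p\u^*$; since the decomposition $\Lambda^p\g^*=e^0\wedge\Lambda^{p-1}\u^*\oplus\Lambda^p\u^*$ is orthogonal, the equation $\nabla_x\omega=0$ splits into the independent conditions $\nabla_x(e^0\wedge\alpha)=0$ and $\nabla_x\beta=0$, giving the first assertion.

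To finish, I would read off the component conditions one by one. For (1), $\beta$ is parallel iff $A^*\beta=0$ and $Su\iprod\beta=0$ for every $u\in\u$, and this second condition says that $\beta$ annihilates $\Im S$, equivalently $\beta\in\Lambda^p(\Ker S)^*$ since $S$ is symmetric and hence $\Ker S=(\Im S)^\perp$. For (2), $e^0\wedge\alpha$ is parallel iff $A^*\alpha=0$ and $v^*\wedge\alpha=0$ for every $v\in\Im S$; splitting $\alpha$ by bidegree in $\u=\Im S\oplus\Ker S$, the latter condition forces each summand to contain a factor of the full volume form $\eta$ of $\Im S$, so $\alpha=\eta\wedge\gamma$ with $\gamma\in\Lambda^{p-1-\dim\Im S}(\Ker S)^*$. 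For (3), $\omega=e^0$ corresponds to $\beta=0$ and $\alpha=1$, and the formulas reduce to $\nabla_{e_0}e^0=0$ (automatic) and $\nabla_u e^0=-(Su)^*$, which vanishes for all $u$ iff $S=0$.

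The only real obstacle is the small exterior algebra step embedded in (2): one must check that $v^*\wedge\alpha=0$ for every $v$ in a subspace $V\subset\u$ is equivalent to $\alpha$ being divisible by the volume form of $V$. I would handle this by a direct bidegree argument on $\Lambda^{p-1}\u^*=\bigoplus_{i+j=p-1}\Lambda^i V^*\wedge\Lambda^j(V^\perp)^*$, noting that any nonzero summand of bidegree $(i,j)$ with $i<\dim V$ admits some $v^*\in V^*$ missing from its support, which produces a nonzero wedge and contradicts the hypothesis.
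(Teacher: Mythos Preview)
Your proposal is correct and follows essentially the same approach as the paper: compute $\nabla_{e_0}\omega$ and $\nabla_u\omega$ via the Leibniz rule and Lemma \ref{lema_util_beta}, split according to the orthogonal decomposition $\Lambda^p\g^*=e^0\wedge\Lambda^{p-1}\u^*\oplus\Lambda^p\u^*$, and then read off the conditions componentwise. Your treatment is in fact slightly more explicit than the paper's, which asserts the equivalence in (2) (that $(Su)^*\wedge\alpha=0$ for all $u$ iff $\alpha\in\Lambda^*(\Ker S)^*\wedge\eta$) without the bidegree argument you supply.
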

\begin{proof}	
	First we compute $\nabla_{e_0}\omega=\nabla_{e_0}\left( e^0\wedge\alpha+\beta\right)=e^0\wedge\nabla_{e_0}\alpha+\nabla_{e_0}\beta$. It follows from Lemma \ref{lema_util_beta} that $\nabla_{e_0}\omega=0$ if and only if $\nabla_{e_0}(e^0\wedge\alpha)=e^0\wedge\nabla_{e_0}\alpha=0$ and $\nabla_{e_0}\beta=0$.
	On the other hand, we have $\nabla_{u}\omega=\nabla_{u}(e^0\wedge\alpha)+\nabla_{u}\beta=\nabla_{u}e^0\wedge\alpha+e^0\wedge\nabla_{u}\alpha+\nabla_{u}\beta$. Again, it follows from Lemma \ref{lema_util_beta} that $\nabla_{u}\omega=0$ if and only if $\nabla_{u}(e^0\wedge\alpha)=\nabla_{u}e^0\wedge\alpha=0$ and $\nabla_{u}\beta=0$.
	Combining both parts, we obtain that $\omega$ is parallel  if and only if both $e^0\wedge\alpha$ and $\beta$ are both parallel.
	
	Now to prove $(1)$, we use again Lemma \ref{lema_util_beta}, then $\beta\in\Lambda^p\u^*$ is parallel if and only if $A^*\beta=0$ and $Su\iprod\beta=0$ for all $u\in\u$. The latter is equivalent to require that  $\beta\in\Lambda^p(\Ker S)^*$.%\color{blue}Esta bien no?\color{black}
	
	For $(2)$, $e^0\wedge\alpha$ is parallel if and only if $\nabla_{e_0}\alpha=-A^*\alpha=0$ and $\nabla_ue^0\wedge\alpha=-(Su)^*\wedge\alpha=0$. Note that $(Su)^*\wedge\alpha=0$ for all $u\in\u$ is equivalent to require that $\alpha\in\Lambda^*(\Ker S)^*\wedge\eta$ where $\eta$ is the volume form of $\Im S\subset \u$.%implies that $\alpha$ has to contain the volume form of  $\Im S$.
	
	Finally, for $(3)$ $e^0$ is parallel if and only if $\nabla_{u}e^0=-(Su)^*=0$ for all $u\in\u$, that is, $S=0$.
\end{proof}

\

Let us consider now Killing $p$-forms on $\g$. Recall that $\omega\in\Lambda^p\g^*$ is Killing if $$\nabla_x\omega=\frac{1}{p+1}x\iprod d\omega,$$ or equivalently, $x\iprod\nabla_x\omega=0$ for all $x\in\g$. In this case we have the following result about KY forms on metric almost abelian Lie algebras.

\begin{prop}\label{Killing-parallel}
	A $p$-form $\omega=e^0\wedge\alpha+\beta$ for $p\geq 1$ is Killing if and only if $e^0\wedge\alpha$ is parallel and $\beta$ is Killing. 
\end{prop}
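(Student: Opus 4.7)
The plan is to translate the Killing condition $x\iprod\nabla_x\omega=0$ for all $x\in\g$ (equivalently its polarization $x\iprod\nabla_y\omega+y\iprod\nabla_x\omega=0$) by splitting $x$ into the cases $x=e_0$ and $x\in\u$, and decomposing each resulting identity according to the bigrading $\Lambda^p\g^*=e^0\wedge\Lambda^{p-1}\u^*\oplus\Lambda^p\u^*$. Since the two bidegree summands are orthogonal, the conditions on $\alpha$ and on $\beta$ will decouple automatically.

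First I would use Lemma \ref{lema_util_beta} together with the formulas $\nabla_{e_0}e^0=0$ and $\nabla_ue^0=-(Su)^*$ from \eqref{LC_AlmostAbelian} to obtain
\[
\nabla_{e_0}\omega=-e^0\wedge A^*\alpha-A^*\beta,\qquad \nabla_u\omega=-(Su)^*\wedge\alpha+e^0\wedge Su\iprod\beta,
\]
where the expected term $e^0\wedge\nabla_u\alpha$ drops out because $\nabla_u\alpha=e^0\wedge Su\iprod\alpha$ is annihilated by a further $e^0$. Contracting with $e_0$ and $u$ and sorting by bidegree, the Killing system reduces to: (a) $A^*\alpha=0$; (b) $-\la u,Su\ra\alpha+(Su)^*\wedge u\iprod\alpha=0$ for every $u\in\u$; (c1) $u\iprod Su\iprod\beta=0$ for every $u\in\u$; and (c2) $Su\iprod\beta=u\iprod A^*\beta$ for every $u\in\u$. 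Running the same computation with $\alpha=0$ shows that (c1)+(c2) is exactly the condition that $\beta\in\Lambda^p\g^*$ alone be Killing.

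It then remains to identify (a)+(b) with $e^0\wedge\alpha$ being parallel, which by Proposition \ref{w parallel}(2) amounts to $A^*\alpha=0$ together with $(Su)^*\wedge\alpha=0$ for every $u\in\u$. The $(\Leftarrow)$ direction is immediate, since contracting $(Su)^*\wedge\alpha=0$ with $u$ returns (b). The main obstacle is the converse: I would diagonalize $S$, fix an adapted orthonormal basis $\{e_i\}$ of $\u$ with $Se_i=\lambda_i e_i$, and decompose $\alpha=e^i\wedge\alpha_1^{(i)}+\alpha_0^{(i)}$ where neither summand contains $e^i$. Evaluating (b) at $u=e_i$ collapses to $\lambda_i\alpha_0^{(i)}=0$, so whenever $\lambda_i\neq 0$ every monomial of $\alpha$ must contain $e^i$. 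Equivalently, $\alpha\in\Lambda^*(\Ker S)^*\wedge\eta$ with $\eta$ the volume form of $\Im S$, which by Proposition \ref{w parallel}(2) (combined with (a)) is precisely the parallelism of $e^0\wedge\alpha$. The edge case $p=1$, where $\alpha$ is a scalar and $u\iprod\alpha=0$, follows from the same scheme: (b) reduces to $\la u,Su\ra\alpha=0$, forcing $\alpha=0$ or $S=0$, and in either case $\alpha e^0$ is parallel by Proposition \ref{w parallel}(3).
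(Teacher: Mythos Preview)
Your argument is correct, but it takes a longer route than the paper's. The paper works directly with the formulation $\nabla_x\omega=\frac{1}{p+1}x\iprod d\omega$ rather than with its contracted form $x\iprod\nabla_x\omega=0$. After invoking Lemma~\ref{d y d*} to see that $d(e^0\wedge\alpha)=0$ and $d\beta\in e^0\wedge\Lambda^p\u^*$, the bidegree splitting of $\nabla_u\omega=\frac{1}{p+1}u\iprod d\omega$ immediately forces the $\Lambda^p\u^*$-component $-(Su)^*\wedge\alpha$ to vanish; together with $\nabla_{e_0}(e^0\wedge\alpha)=0$ this is already the parallelism of $e^0\wedge\alpha$, with no further analysis needed.

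By contracting first you lose this information: from $u\iprod\nabla_u\omega=0$ you only extract your condition~(b), namely $-\la u,Su\ra\alpha+(Su)^*\wedge(u\iprod\alpha)=0$, and you then have to recover $(Su)^*\wedge\alpha=0$ via the spectral decomposition of $S$. That recovery is valid (your computation $\lambda_i\alpha_0^{(i)}=0$ is exactly what is needed, and it suffices to check $(Se_i)^*\wedge\alpha=0$ on eigenvectors by linearity), but it is an extra step the paper avoids entirely. On the other hand, your route is self-contained in that it does not require Lemma~\ref{d y d*}, and the eigenvector argument you introduce here is essentially the same trick that reappears later in the proof of Theorem~\ref{Killing-parallel_u} via Lemma~\ref{lema oscuro}.
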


\begin{proof}
	
We compute first $\frac{1}{p+1}e_0\iprod d\omega$, it follows from Lemma \ref{d y d*} that
$$\frac{1}{p+1}e_0\iprod d\omega=\frac{1}{p+1}e_0\iprod d(e^0\wedge\alpha+\beta)=\frac{1}{p+1}e_0\iprod d\beta\in  \Lambda^p\u^*.$$
Now, using Lemma \ref{lema_util_beta}, we compute $$\nabla_{e_0}\omega=\nabla_{e_0}(e^0\wedge\alpha)+\nabla_{e_0}\beta=\underbrace{\nabla_{e_0}e^0\wedge\alpha}_{=0}+\underbrace{e^0\wedge\nabla_{e_0}\alpha}_{\in\, e^0\wedge\Lambda^{p-1}\u^*}+\underbrace{\nabla_{e_0}\beta}_{\in\, \Lambda^{p}\u^*}.$$ Therefore,
\begin{equation}\label{kp1}
	\nabla_{e_0}\omega=\frac{1}{p+1}e_0\iprod d\omega \quad\Leftrightarrow \quad \nabla_{e_0}(e^0\wedge\alpha)=0 \quad \text{and}\quad  \nabla_{e_0}\beta=\frac{1}{p+1}e_0\iprod d\beta.
\end{equation}

Now, for $u\in\u$, we have $\nabla_{u}\omega=\nabla_{u}(e^0\wedge\alpha)+\nabla_{u}\beta=\underbrace{\nabla_{u}e^0\wedge\alpha}_{\in \,\Lambda^{p}\u^*}+\underbrace{e^0\wedge\nabla_{u}\alpha}_{=0}+\underbrace{\nabla_{u}\beta}_{\in\, e^0\wedge\Lambda^{p-1}\u^*}$
(note that the second term $\nabla_{u}\alpha$ makes sense for $p\geq2$, but that term does not even exist when $p=1$.)
On the other hand, $\frac{1}{p+1}u\iprod d\omega=\frac{1}{p+1}u\iprod d\beta\in e^0\wedge\Lambda^{p-1}\u^*$, then   
\begin{equation}\label{kp2}
	\nabla_{u}\omega=\frac{1}{p+1}u\iprod d\beta \quad\Leftrightarrow \quad\nabla_{u}(e^0\wedge\alpha)=0  \quad \text{and}\quad  \nabla_{u}\beta=\frac{1}{p+1}u\iprod d\beta.	
\end{equation}
From \eqref{kp1} and \eqref{kp2} it is easy to see that $\omega$ is Killing if and only if $e^0\wedge\alpha$ is parallel and $\beta$ is Killing.
\end{proof}

The last result suggests analyzing Killing $p$-forms $\beta$ defined on $\u$, that is, $\beta\in\Lambda^p\u^*$. Before that, we need the following technical result. Recall that we have $\displaystyle{\u=\bigoplus_{\lambda\in L} \u_\lambda}$ where $\u_\lambda$ is the $\lambda$-eigenspace and $L$ denotes the spectrum of $S$, since $S$ is a symmetric operator on $\u$. 

\begin{lema}\label{lema oscuro}
	Let $\alpha,\beta\in\Lambda^p\u^*$ for $p\geq 1$. If $Sx\iprod \alpha =  x\iprod \beta$, for all $x\in\u$, then $\displaystyle \alpha,\beta\in\bigoplus_{\lambda\in L}\Lambda^p\u_\lambda^*$.
	In particular, if $p>\dim\u_\lambda$ for all $\lambda \in L$, then $\alpha=\beta=0$.
\end{lema}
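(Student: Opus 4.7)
The plan is to work in an orthonormal basis of $\u$ adapted to the eigenspace decomposition $\u=\bigoplus_{\lambda\in L}\u_\lambda$ and extract pointwise conditions on the coefficients of $\alpha$ and $\beta$ by evaluating the hypothesis $Sx\iprod\alpha=x\iprod\beta$ on basis elements.

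Concretely, let $\{e_i\}$ be an orthonormal basis of $\u$ with each $e_i\in\u_{\lambda(i)}$ for some $\lambda(i)\in L$, so that $Se_i=\lambda(i)e_i$. I would expand
\[
\alpha=\sum_{I}\alpha_I\, e^I,\qquad \beta=\sum_{I}\beta_I\, e^I,
\]
where $I=(i_1<\dots<i_p)$ runs over increasing multi-indices and $e^I=e^{i_1}\wedge\dots\wedge e^{i_p}$. Setting $x=e_i$ in the hypothesis yields $\lambda(i)\,e_i\iprod\alpha=e_i\iprod\beta$. Since $e_i\iprod e^I$ vanishes unless $i\in I$, and in that case equals $\pm e^{I\setminus\{i\}}$ (with the sign depending on the position of $i$ in $I$), comparing the coefficient of $e^J$ for each $(p-1)$-multi-index $J$ with $i\notin J$ gives the clean scalar identity
\[
\lambda(i)\,\alpha_{\{i\}\cup J}=\beta_{\{i\}\cup J}.
\]

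Now I would exploit the fact that this identity must hold for \emph{every} index $i$ in a given multi-index $I$: if $i,j\in I$ with $\lambda(i)\neq\lambda(j)$, then $\lambda(i)\alpha_I=\beta_I=\lambda(j)\alpha_I$, so $\alpha_I=\beta_I=0$. Consequently, only those multi-indices $I$ whose basis vectors lie in a single eigenspace $\u_\lambda$ can contribute, which is exactly the statement that $\alpha,\beta\in\bigoplus_{\lambda\in L}\Lambda^p\u_\lambda^*$. For the final clause, if $p>\dim\u_\lambda$ for every $\lambda\in L$, then each summand $\Lambda^p\u_\lambda^*$ is trivially zero, forcing $\alpha=\beta=0$.

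The argument is essentially combinatorial bookkeeping once the adapted basis is fixed, so I do not expect a genuine obstacle; the only delicate point is keeping track of the signs in the contraction $e_i\iprod e^I$ to ensure that different multi-indices do not interfere when matching coefficients of $e^J$. This is handled by noting that for each $(p-1)$-index $J$ and each $i\notin J$ there is a unique $I$ of size $p$ with $J\subset I$ and $i\in I$, so the coefficient comparison is unambiguous and the sign cancels from both sides of the equation.
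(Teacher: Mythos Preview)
Your proof is correct and follows essentially the same strategy as the paper's: both evaluate the hypothesis on eigenvectors of $S$ to obtain, for any ``mixed'' component, the pair of relations $\lambda\alpha_{\ldots}=\beta_{\ldots}$ and $\mu\alpha_{\ldots}=\beta_{\ldots}$ with $\lambda\neq\mu$, forcing that component to vanish. The only difference is cosmetic: the paper organizes the argument via an abstract decomposition $\Lambda^p\u^*=\Omega_{\lambda,\mu}\oplus\Omega_\lambda\oplus\Omega_\mu\oplus\Omega$ for each pair $\lambda\neq\mu$, whereas you work directly with multi-index coefficients in an adapted orthonormal basis, which makes the bookkeeping a bit more transparent.
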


\begin{proof} 
 Let us fix $\lambda\in L$. If $L=\{\lambda\}$, there's nothing to prove. Otherwise, let us consider $\mu\in L$ with $\mu\neq \lambda$. We decompose $ \Lambda^p\u^*=  \Omega_{\lambda,\mu} \oplus \Omega_{\lambda} \oplus \Omega_{\mu} \oplus \Omega$, where
 \begin{align*}
 \Omega_{\lambda,\mu}=& \displaystyle{\bigoplus_{a+b+c=p, a,b\geq1} }\Lambda^a\u_\lambda^* \otimes \Lambda^b\u_\mu^* \otimes  \Lambda^c  \displaystyle{\oplus_{\sigma\neq\lambda,\mu} \u_\sigma^*} \\
 \Omega_{\lambda}=&  \bigoplus_{a+c=p, a\geq1} \Lambda^a\u_\lambda^*  \otimes  \Lambda^c \oplus_{\sigma\neq\mu} \u_\sigma^* \\
 \Omega_{\mu}=& \bigoplus_{b+c=p, b\geq1} \Lambda^b\u_\mu^*  \otimes  \Lambda^c \oplus_{\sigma\neq\lambda} \u_\sigma^* \\
 \Omega = & \Lambda^p \oplus_{\sigma\neq\lambda,\mu} \u_\sigma^*.
\end{align*}
Therefore, 
given $\alpha\in \Lambda^p\u^*$, we can write $\alpha=\alpha_{\lambda,\mu} + \alpha_{\lambda} + \alpha_{\mu} + \alpha_0$ with $\alpha_{\lambda,\mu}\in\Omega_{\lambda,\mu}, \alpha_{\lambda}\in\Omega_{\lambda}, \alpha_{\mu}\in\Omega_{\mu}, \alpha_0\in\Omega$. 

The condition $Sx\iprod \alpha =  x\iprod \beta$, for all $x\in\u$ reduces to $x\iprod \lambda\alpha =  x\iprod \beta$, for all $x\in\u_\lambda$ and for all $\lambda\in L$.
In particular, 
$x\iprod \lambda\alpha= x\iprod\lambda\alpha_{\lambda,\mu} + x\iprod\lambda\alpha_{\lambda} + x\iprod\lambda\alpha_{\mu} + x\iprod\lambda\alpha_0= x\iprod\lambda\alpha_{\lambda,\mu} + x\iprod\lambda\alpha_{\lambda}$, and similarly we have
$x\iprod \beta=x\iprod\beta_{\lambda,\mu} + x\iprod\beta_{\lambda} + x\iprod\beta_{\mu} + x\iprod\beta_0= x\iprod\beta_{\lambda,\mu} + x\iprod\beta_{\lambda}$. Therefore, we obtain for all $x\in\u_\lambda$, $$x\iprod\lambda\alpha_{\lambda,\mu}=x\iprod\beta_{\lambda,\mu}, \,\,  x\iprod\lambda\alpha_{\lambda}=x\iprod\beta_{\lambda}.$$ 

In the same way, we compute $y\iprod \lambda\alpha =  y\iprod \beta$ for $y\in\u_\mu$ with $\mu\neq \lambda$, and we obtain
$$y\iprod\mu\alpha_{\lambda,\mu}=y\iprod\beta_{\lambda,\mu}, \, \, y\iprod\mu\alpha_{\mu}=y\iprod \beta_{\mu},$$ 
for all $y\in\u_\mu$.
In particular, the first equation $x\iprod\lambda\alpha_{\lambda,\mu}=x\iprod\beta_{\lambda,\mu}$ for all $x\in\u_\lambda$ says that the coefficients (for some basis of $\Omega_{\lambda,\mu}$) of $\beta_{\lambda,\mu}$ are a multiple of the coefficients of $\alpha_{\lambda,\mu}$ and this multiple is $\lambda$. 
On the other hand, the second equation
$y\iprod\mu\alpha_{\lambda,\mu}=y\iprod\beta_{\lambda,\mu}$ for all  $y\in\u_\mu$
says the same with $\mu$. Since $\mu\neq \lambda$, therefore $\alpha_{\lambda,\mu}=0$ and then $\beta_{\lambda,\mu}=0$.
Since $\mu$ is arbitrary, then the statement follows.
\end{proof}

We consider now Killing $p$-forms defined on $\u$. If $p=1$, Killing $1$-forms correspond to Killing vectors, and we have:
\begin{prop}\label{1-forms Killing in u}
	Let $\beta\in\Lambda^p\u^*$, then $\beta$ is a Killing $1$-form on $\g$ if and only if its dual vector $z\in\z(\g)=\Ker(A+S)$. Moreover, $\beta$ is parallel if and only if $z\in \Ker A\cap\Ker S$.
\end{prop}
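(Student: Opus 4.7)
The approach is to use the standard duality between $1$-forms and vectors: writing $\beta=z^*$ with $z\in\u$ (note $z\in\u$ automatically since $\beta\in\Lambda^1\u^*$ forces $\beta(e_0)=0$), the Killing equation $\nabla_x\beta(y)+\nabla_y\beta(x)=0$ on $\beta$ is equivalent to the classical Killing vector field equation $\la\nabla_x z,y\ra+\la\nabla_y z,x\ra=0$ for all $x,y\in\g$. I would then evaluate this case by case according to the decomposition $\g=\R e_0\oplus\u$.

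First, using Lemma \ref{lema_util_beta} applied to the $1$-form $\beta$, compute $\nabla_{e_0}\beta=-A^*\beta=(Az)^*$ (unwinding the definition and using that $A$ is skew-symmetric: $-\beta(Au)=-\la z,Au\ra=\la Az,u\ra$), and $\nabla_u\beta=e^0\wedge(Su\iprod\beta)=\la Su,z\ra e^0=\la u,Sz\ra e^0$ since $S$ is symmetric. Both expressions appear, alternatively, directly from the Koszul formulas \eqref{LC_AlmostAbelian} applied to $z$: $\nabla_{e_0}z=Az\in\u$ and $\nabla_u z=\la Su,z\ra e_0$.

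Next, check the Killing equation in the three relevant cases. For $x=y=e_0$ it reduces to $2\la Az,e_0\ra=0$, which holds since $Az\in\u$. For $x,y\in\u$, each term has the form $\la\cdot,Sz\ra e^0(\cdot)$ evaluated on a vector of $\u$, hence vanishes. The only nontrivial case is $x=e_0$, $y=u\in\u$, which gives
\begin{equation*}
\la Az,u\ra+\la u,Sz\ra=\la u,(A+S)z\ra=0 \quad\text{for all } u\in\u,
\end{equation*}
equivalently $(A+S)z=0$. Since $A+S=\ad_{e_0}|_\u$ and $z\in\u$, this is exactly $[e_0,z]=0$, i.e. $z\in\z(\g)=\Ker(A+S)$.

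For the parallel statement, $\nabla\beta=0$ splits as $\nabla_{e_0}\beta=0$ and $\nabla_u\beta=0$ for all $u\in\u$. By the formulas above, the first is $Az=0$ and the second is $\la u,Sz\ra=0$ for all $u$, i.e. $Sz=0$; together, $z\in\Ker A\cap\Ker S$. The computation is direct; there is no real obstacle, only bookkeeping of the cases and the sign conventions that come from $A$ being skew and $S$ symmetric.
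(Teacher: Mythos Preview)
Your proof is correct and essentially follows the same line as the paper's: both identify the dual vector $z$ and reduce the Killing condition to $(A+S)z=0$, and both obtain the parallel characterization from Lemma~\ref{lema_util_beta}. The only cosmetic difference is that the paper invokes the known fact that a left-invariant vector field is Killing if and only if $\ad_z$ is skew-symmetric (and then observes this forces $z\in\z(\g)$), whereas you verify the Killing equation $\la\nabla_x z,y\ra+\la\nabla_y z,x\ra=0$ directly from the explicit Levi-Civita formulas \eqref{LC_AlmostAbelian}.
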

\begin{proof}
	$\beta$ is Killing if its dual vector $z$ is a Killing vector on $\g$, that is, $\ad_z$ is a skew-symmetric endomorphism of $\g$, which is equivalent to $z\in\z(\g)=\Ker(A+S)$. Moreover, $\beta$ is parallel if $z\in \Ker A\cap\Ker S$, as a consequence of Lemma \ref{lema_util_beta}. 
\end{proof}

On the other hand, Killing $p$-forms with $p\geq 2$ turn out to be parallel as the following result shows.

\begin{teo}\label{Killing-parallel_u}
	Let $\beta\in\Lambda^p\u^*$ with $p\geq2$, if $\beta$ is a Killing $p$-form on $\g$ then $\beta$ is parallel. Moreover, any Killing $p$-form $\omega=e^0\wedge\alpha+\beta$ as above is parallel.
\end{teo}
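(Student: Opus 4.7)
The plan is to reduce everything via Proposition \ref{Killing-parallel} to the statement that a Killing $p$-form $\beta\in\Lambda^p\u^*$ with $p\geq 2$ is already parallel; once this is proved, the \emph{moreover} part follows by combining Propositions \ref{Killing-parallel} and \ref{w parallel}. To handle $\beta$, I would first rewrite the Killing equation $\nabla_x\beta=\tfrac{1}{p+1}\,x\iprod d\beta$ explicitly using Lemmas \ref{lema_util_beta} and \ref{d y d*}. Testing with $x=e_0$ yields $d\beta=-(p+1)\,e^0\wedge A^*\beta$, and testing with $x=u\in\u$ collapses everything to the single identity
\begin{equation}\label{plan-key-id}
	Su\iprod\beta \;=\; u\iprod A^*\beta \qquad \text{for all } u\in\u.
\end{equation}
Lemma \ref{lema oscuro} then yields $\beta, A^*\beta\in\bigoplus_{\lambda\in L}\Lambda^p\u_\lambda^*$, and I write $\beta=\sum_\lambda\beta_\lambda$ accordingly.

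Fix $\lambda\in L$ and test \eqref{plan-key-id} with $u\in\u_\lambda$. Since $u\perp\u_\mu$ for $\mu\neq\lambda$, $u$ contracts trivially with every $\beta_\mu$ and every $(A^*\beta)_\mu$ for $\mu\neq\lambda$, so \eqref{plan-key-id} reduces to $u\iprod\big((A^*\beta)_\lambda-\lambda\beta_\lambda\big)=0$ for all $u\in\u_\lambda$, forcing
$$(A^*\beta)_\lambda=\lambda\beta_\lambda.$$
The key structural step---and the place where $p\geq 2$ is essential---is to compute $(A^*\beta)_\lambda$ a second way. Writing $A^*\beta=\sum_\mu A^*\beta_\mu$, a direct inspection shows that for $\mu\neq\lambda$ the restriction of $A^*\beta_\mu$ to arguments $X_1,\ldots,X_p\in\u_\lambda$ vanishes: each term $\beta_\mu(X_1,\ldots,AX_i,\ldots,X_p)$ involves at least $p-1\geq 1$ arguments $X_j$ with $P_\mu X_j=0$ (where $P_\mu$ is the orthogonal projection onto $\u_\mu$), and $\beta_\mu$ depends only on the $\u_\mu$-components of its arguments. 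Hence $(A^*\beta)_\lambda=(A^*\beta_\lambda)_\lambda=A_{\lambda\lambda}^*\beta_\lambda$, where $A_{\lambda\lambda}:=P_\lambda A|_{\u_\lambda}:\u_\lambda\to\u_\lambda$ is the (skew-symmetric) diagonal block of $A$ on $\u_\lambda$. Combining the two expressions gives $A_{\lambda\lambda}^*\beta_\lambda=\lambda\beta_\lambda$, and Lemma \ref{A antisimetrico} applied to $A_{\lambda\lambda}$ makes $A_{\lambda\lambda}^*$ skew on $\Lambda^p\u_\lambda^*$; pairing with $\beta_\lambda$ then yields $\lambda\|\beta_\lambda\|^2=0$, so $\beta_\lambda=0$ for every $\lambda\neq 0$.

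We conclude: $\beta=\beta_0\in\Lambda^p(\Ker S)^*$ and $A^*\beta=\sum_\mu(A^*\beta)_\mu=\sum_\mu\mu\,\beta_\mu=0$ (the $\mu=0$ term is zero and $\beta_\mu=0$ for $\mu\neq 0$). Both conditions of Proposition \ref{w parallel}(1) are thus satisfied, so $\beta$ is parallel. For the \emph{moreover} part, Proposition \ref{Killing-parallel} gives that any Killing $\omega=e^0\wedge\alpha+\beta$ has $e^0\wedge\alpha$ parallel and $\beta$ Killing; the first part of the proof makes $\beta$ parallel, and then the first assertion of Proposition \ref{w parallel} gives that $\omega$ itself is parallel. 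The main obstacle I anticipate is precisely the key structural step: its failure at $p=1$ is what permits Killing $1$-forms on $\u$ to be non-parallel, consistent with Proposition \ref{1-forms Killing in u}.
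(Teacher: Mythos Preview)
Your proof is correct and follows the same overall strategy as the paper: derive the identity $Su\iprod\beta=u\iprod A^*\beta$, apply Lemma~\ref{lema oscuro} to block-diagonalize $\beta$ and $A^*\beta$, obtain an eigenvalue equation on each $\Lambda^p\u_\lambda^*$, and conclude via Lemma~\ref{A antisimetrico} that $\beta_\lambda=0$ for $\lambda\neq 0$.

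The one genuine difference is the ``key structural step''. The paper proves the stronger statement that $A^*\beta_\lambda\in\Lambda^p\u_\lambda^*$ itself (not merely its $\u_\lambda$-restriction), which requires $p\geq 3$ and forces the case $p=2$ to be handled separately by quoting Theorem~\ref{almost-abelian-2-forms}. You instead restrict attention to the $\Lambda^p\u_\lambda^*$-component only and identify it with $A_{\lambda\lambda}^*\beta_\lambda$, where $A_{\lambda\lambda}=P_\lambda A|_{\u_\lambda}$ is the skew-symmetric diagonal block of $A$; this needs only $p-1\geq 1$, so it treats all $p\geq 2$ uniformly. Your route is therefore slightly cleaner and self-contained, at the modest cost of invoking Lemma~\ref{A antisimetrico} for the block $A_{\lambda\lambda}$ on $(\u_\lambda,\pint|_{\u_\lambda})$ rather than for $A$ on $\u$---which is harmless since that lemma's proof uses only skew-symmetry of the linear map.
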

\begin{proof}
	For the case $p=2$, it is included in Theorem \ref{almost-abelian-2-forms} $(2)$. We assume now $p\geq3$, and let $\beta\in\Lambda^p\u^*$ be a Killing $p$-form on $\g$, which is equivalent to $(ae_0+x) \iprod\nabla_{ae_0+x}\beta=0$ for all $a\in\R$ and $x\in\u$. Using Lemma \ref{lema_util_beta} we have that
\begin{align*}
\left(ae_0+x \right)\iprod \nabla_{ae_0+x}\beta&=ae_0 \iprod \nabla_{x}\beta+x\iprod \nabla_{ae_0}\beta+x\iprod \nabla_{x}\beta\\ 
& = \underbrace{aS(x)\iprod \beta -  ax\iprod A^*\beta}_{\in \Lambda^{p-1}\mathfrak{u}^*} + \underbrace{x\iprod (e^0\wedge S(x)\iprod\beta)}_{\in e^0\wedge\Lambda^{p-2}\mathfrak{u}^*}\\
\end{align*}
Then, $\beta$ is Killing if and only if $Sx\iprod \beta -  x\iprod A^*\beta=0$ and $x\iprod Sx\iprod\beta=0$. Moreover, the second condition follows from the first one by contracting with $x$. Therefore, $\beta$ is Killing if and only if for all $x\in\u$ it satisfies
\begin{equation}\label{beta en u Killing}
Sx\iprod \beta =  x\iprod A^*\beta.
\end{equation}
It follows from Lemma \ref{lema oscuro} that
$$\beta=\sum_{\lambda}\beta_\lambda, \quad A^*\beta=\sum_{\lambda}\eta_\lambda, \quad\text{with} \quad  \beta_\lambda,\eta_\lambda\in\Lambda^p\u_\lambda^*.$$
Since $x\iprod\beta=x\iprod\beta_\lambda$ for $x\in\u_\lambda$, then it follows from \eqref{beta en u Killing} that $\lambda\beta_\lambda=\eta_\lambda$. 

On the other hand, $A^*\beta= \displaystyle A^*\sum_{\lambda\neq0}\beta_\lambda=\sum_{\lambda\neq0} A^*\beta_\lambda$. We claim that 
$A^*\beta_\lambda\in\Lambda^p\u_\lambda^*$ (this holds for $p\geq3$), that is, 
$\eta_\lambda=A^*\beta_\lambda$. Indeed, it follows from \eqref{A*} that $A^*\beta_\lambda\in\u^*\otimes\Lambda^{p-1}\u_\lambda^*$. Let us consider now $x\in \u$, $x_i\in\u_\lambda$ for $i=2,\dots,p$ we have that $A^*\beta_\sigma(x,x_2,\dots,x_p)=0$ for all $\sigma\neq\lambda$, since $p\geq 3$. 
Then, $A^*\beta_\lambda(x,x_2,\dots,x_p)=\sum_\sigma A^*\beta_\sigma(x,x_2,\dots,x_p)=A^*\beta(x,x_2,\dots,x_p)=0$ for any $x\in\u_\sigma$ with $\sigma\neq \lambda$,
since $A^*\beta\in\displaystyle\bigoplus_{\lambda\neq0}\Lambda^p\u_\lambda^*$, therefore, $A^*\beta_\lambda\in\Lambda^p\u_\lambda^*$ and the claim is proved.
Comparing now both expressions of $\eta_\lambda$ we obtain that $$A^*\beta_\lambda=\lambda\beta_\lambda\in\Lambda^p\u_\lambda^*,$$
which implies that $\beta_\lambda=0$ for all $\lambda\neq0$ according to 
Lemma \ref{A antisimetrico}.
Therefore, $\beta=\beta_0\in\Lambda^p\u_0^*$, and \eqref{beta en u Killing} reduces to $A^*\beta=0$, which is equivalent to $\beta$ being parallel according to Proposition \ref{w parallel} $(1)$.

Finally, if we consider a Killing $p$-form $\omega=e^0\wedge\alpha+\beta$ for $p\geq 2$, it follows from Proposition \ref{Killing-parallel} that $e^0\wedge\alpha$ is parallel and $\beta$ is Killing.  Then, we proved that $\beta$ has to be parallel, and therefore $\omega$ is parallel as well.
\end{proof}

%\begin{rem}
%	Note that Theorem \ref{Killing-parallel_u} generalizes Theorem \ref{almost-abelian-2-forms} $(2)$, taking $p=2$.
%\end{rem}

\begin{rem}\label{*Killing-parallel}
	Recall that a $p$-form $\omega$ is called $*$-Killing if $\nabla_x\omega=\frac{1}{d-p+1}x\wedge d^*\omega$, with $d=\dim\g$ for all $x\in\g$.
	Using that the $*$-Hodge operator interchanges Killing and $*$-Killing forms, therefore the proposition above tell us that any $*$-Killing form on $\g$ is also parallel if $p\leq d-2$.
\end{rem}

Finally, we consider the most general case, that is, conformal Killing $p$-forms on $\g$. Recall this means, for all $x\in\g$, that $$\nabla_x\omega=\frac{1}{p+1}x\iprod d\omega+\frac{1}{d-p+1}x\wedge d^*\omega.$$ 
We are in conditions to state the main result of this section.

\begin{teo}\label{ck iff *ky y ky}
	Let $\omega=e^0\wedge\alpha+\beta$ be a $p$-form as above with $p\geq1$. Then $\omega$ is conformal Killing on $\g$ if and only if $e^0\wedge\alpha$ is $*$-Killing and $\beta$ is Killing.
\end{teo}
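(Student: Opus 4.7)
The plan is to extend the decomposition-based argument of Proposition \ref{Killing-parallel} to the CKY setting by separately tracking the two pieces of every term of the CKY equation with respect to the splitting $\Lambda^p\g^*=e^0\wedge\Lambda^{p-1}\u^*\oplus\Lambda^p\u^*$. The key preliminary is to locate $d\omega$ and $d^*\omega$ in this decomposition. By Lemma \ref{d y d*}, $d(e^0\wedge\alpha)=0$ and $d^*\beta=0$, so
\[
d\omega = d\beta\in e^0\wedge\Lambda^p\u^*,\qquad d^*\omega = d^*(e^0\wedge\alpha)\in\Lambda^{p-1}\u^*.
\]
The first is immediate; the second I would verify by a direct computation with the Koszul formulas \eqref{LC_AlmostAbelian}, since $\nabla_{e_0}(e^0\wedge\alpha)=e^0\wedge\nabla_{e_0}\alpha$ contributes no $e^0$-free part after contraction with $e_0$, while $\nabla_u(e^0\wedge\alpha)=-(Su)^*\wedge\alpha+e^0\wedge e^0\wedge(Su\iprod\alpha)=-(Su)^*\wedge\alpha\in\Lambda^p\u^*$, so contracting with $u$ lands in $\Lambda^{p-1}\u^*$.

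Next I would test the CKY equation \eqref{cky_lie_algebras} separately at $x=e_0$ and at $x=u\in\u$. For $x=e_0$, Lemma \ref{lema_util_beta} gives $\nabla_{e_0}\omega=-e^0\wedge A^*\alpha-A^*\beta$; contracting $e_0$ into $d\omega=d\beta=e^0\wedge\gamma$ yields $\gamma\in\Lambda^p\u^*$, and $e^0\wedge d^*\omega\in e^0\wedge\Lambda^{p-1}\u^*$. Since the two summands of the splitting are independent, the CKY equation at $e_0$ is equivalent to the pair
\[
-A^*\beta=\tfrac{1}{p+1}e_0\iprod d\beta,\qquad -e^0\wedge A^*\alpha=\tfrac{\pm 1}{d-p+1}e^0\wedge d^*(e^0\wedge\alpha).
\]
For $x=u\in\u$, the same bookkeeping shows $\nabla_u\omega=-(Su)^*\wedge\alpha+e^0\wedge(Su\iprod\beta)$, while $u\iprod d\beta\in e^0\wedge\Lambda^{p-1}\u^*$ and $u^*\wedge d^*\omega\in\Lambda^p\u^*$; again the CKY equation splits into two independent identities, one in each summand.

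Finally I would observe that the identities living in $\Lambda^p\u^*$ (for $x=e_0$) and in $e^0\wedge\Lambda^{p-1}\u^*$ (for $x=u$) are exactly the Killing equation $\nabla_x\beta=\tfrac{1}{p+1}x\iprod d\beta$ applied at $e_0$ and at $u$ respectively, using $d^*\beta=0$; while the remaining two identities are the $*$-Killing equation $\nabla_x(e^0\wedge\alpha)=\tfrac{1}{d-p+1}x^*\wedge d^*(e^0\wedge\alpha)$ for the closed form $e^0\wedge\alpha$, again evaluated at $e_0$ and at $u$. Since every $x\in\g$ decomposes uniquely as $ae_0+u$, this gives the biconditional. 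The main obstacle is purely bookkeeping: one must verify that the ``cross'' terms $e^0\wedge\nabla_{e_0}\alpha$ and $\nabla_u e^0\wedge\alpha$ in $\nabla_x(e^0\wedge\alpha)$ really land in the advertised subspace of the splitting, and that no hidden interaction between the $\alpha$- and $\beta$-pieces arises in $d\omega$ or $d^*\omega$; both facts rest on the vanishing statements $d(e^0\wedge\alpha)=0$ and $d^*\beta=0$ from Lemma \ref{d y d*}, which are what makes the decomposition into Killing plus $*$-Killing pieces work without residual coupling.
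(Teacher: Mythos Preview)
Your proposal is correct and follows essentially the same route as the paper's proof: use Lemma~\ref{d y d*} to kill $d(e^0\wedge\alpha)$ and $d^*\beta$, then split every term of the CKY equation at $x=e_0$ and at $x=u\in\u$ according to $\Lambda^p\g^*=e^0\wedge\Lambda^{p-1}\u^*\oplus\Lambda^p\u^*$, and read off the Killing and $*$-Killing equations on the two summands. The only cosmetic difference is that you make the verification $d^*(e^0\wedge\alpha)\in\Lambda^{p-1}\u^*$ explicit, whereas the paper uses it implicitly; also, you should drop the ``$\pm$'' hedge and commit to the sign dictated by \eqref{cky_lie_algebras}.
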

\begin{proof}
Let $\omega=e^0\wedge\alpha+\beta$ be a CKY $p$-form, as a consequence of Lemma \ref{lema_util_beta} we have that 
\begin{align*}
	\nabla_{e_0}\omega&=\underbrace{\nabla_{e_0}(e^0\wedge\alpha)}_{\in e^0\wedge\Lambda^{p-1}\mathfrak{u}^*}+ \underbrace{\nabla_{e_0}\beta}_{\in\Lambda^{p}\mathfrak{u}^*}\\
	e_0 \iprod d\omega &=e_0 \iprod \underbrace{d(e^0\wedge\alpha)}_{=0} + \underbrace{e_0 \iprod d\beta}_{\in\Lambda^{p}\mathfrak{u}^*} \\
	e_0 \wedge d^*\omega& = \underbrace{e_0 \wedge d^*(e^0\wedge\alpha)}_{\in e^0\wedge\Lambda^{p-1}\mathfrak{u}^*} +   \underbrace{e_0 \wedge d^*\beta}_{=0}
\end{align*}	
	Similarly, for any $x\in\u$ we have
\begin{align*}
	\nabla_{x}\omega&=\underbrace{\nabla_{x}(e^0\wedge\alpha)}_{\in\Lambda^{p}\mathfrak{u}^*}+ \underbrace{\nabla_{x}\beta}_{\in e^0\wedge\Lambda^{p-1}\mathfrak{u}^*}\\
	x\iprod d\omega &=x \iprod \underbrace{d(e^0\wedge\alpha)}_{=0} + \underbrace{x \iprod d\beta}_{\in e^0\wedge\Lambda^{p-1}\mathfrak{u}^*} \\
	x \wedge d^*\omega& = \underbrace{x \wedge d^*(e^0\wedge\alpha)}_{\in \Lambda^{p}\mathfrak{u}^*} +   \underbrace{x \wedge d^*\beta}_{=0}
\end{align*}
Then, $\omega$ is CKY if and only if $e^0\wedge\alpha$ is $*$-Killing and $\beta$ is Killing. 
\end{proof}

\begin{cor}
		Any conformal Killing $p$-form $\omega=e^0\wedge\alpha+\beta$, for $2\leq p\leq \dim\g-2$, is parallel. In other words, if there exist a non-parallel conformal Killing $p$-form, then $p=1$ or $p=\dim\g-1$.
\end{cor}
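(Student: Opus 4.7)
The plan is to combine Theorem \ref{ck iff *ky y ky} with the two ``parallel'' results proved just before it (Theorem \ref{Killing-parallel_u} for Killing forms and Remark \ref{*Killing-parallel} for $*$-Killing forms), so that this corollary becomes essentially a bookkeeping check on the ranges of $p$.

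First, I would apply Theorem \ref{ck iff *ky y ky} to split the CKY condition on $\omega=e^0\wedge\alpha+\beta$ into the two independent conditions: $e^0\wedge\alpha$ is $*$-Killing and $\beta\in\Lambda^p\u^*$ is Killing on $\g$. The goal is then to show that, under the hypothesis $2\le p\le \dim\g-2$, each of these two pieces is already parallel, which immediately gives that their sum $\omega$ is parallel.

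For the summand $\beta\in\Lambda^p\u^*$, since we are assuming $p\ge 2$, Theorem \ref{Killing-parallel_u} applies and yields that $\beta$ is parallel. For the summand $e^0\wedge\alpha$, it is a $*$-Killing $p$-form on $\g$; setting $d=\dim\g$, the hypothesis $p\le d-2$ is exactly what Remark \ref{*Killing-parallel} needs (so that its Hodge dual is a Killing $(d-p)$-form with $d-p\ge 2$, and hence parallel by Theorem \ref{Killing-parallel_u}, which transfers back to $e^0\wedge\alpha$ via $\ast$). So $e^0\wedge\alpha$ is parallel too.

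Combining both, $\omega=e^0\wedge\alpha+\beta$ is a sum of two parallel forms, hence parallel. The contrapositive is exactly the ``in other words'' reformulation: a non-parallel CKY $p$-form forces $p=1$ or $p=d-1$. There is no real obstacle here beyond verifying that the inequalities $p\ge 2$ and $p\le d-2$ required by the two cited results are jointly guaranteed by the range $2\le p\le d-2$, which they clearly are; the substance of the argument has already been done in Theorems \ref{ck iff *ky y ky} and \ref{Killing-parallel_u} together with Remark \ref{*Killing-parallel}.
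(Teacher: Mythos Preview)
Your proposal is correct and follows essentially the same approach as the paper's own proof: split via Theorem~\ref{ck iff *ky y ky}, apply Theorem~\ref{Killing-parallel_u} to $\beta$ (using $p\ge 2$), and handle $e^0\wedge\alpha$ by passing to its Hodge dual and invoking Theorem~\ref{Killing-parallel_u} again (using $d-p\ge 2$). The only cosmetic difference is that you cite Remark~\ref{*Killing-parallel} for the $*$-Killing piece, whereas the paper spells out that step directly by writing $\eta=*(e^0\wedge\alpha)\in\Lambda^{d-p}\u^*$ and applying Theorem~\ref{Killing-parallel_u} to it.
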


\begin{proof}
	From Theorem \ref{ck iff *ky y ky}, $\omega$ is conformal Killing if and only if $e^0\wedge\alpha$ is $*$-Killing and $\beta$ is Killing. Then, from Theorem \ref{Killing-parallel_u} we have that $\beta$ has to be parallel. On the other hand, if $e^0\wedge\alpha$ is $*$-Killing, then $\eta=*(e^0\wedge\alpha)\in \Lambda^{d-p}\mathfrak{u}^*$ is a $(d-p)$-Killing form on $\g$ with $d=\dim\g$.  Since $p\leq \dim\g-2$ we can use Theorem  \ref{Killing-parallel_u} again to obtain that $\eta$ (and then $*\eta$) has to be parallel, which implies that $\omega$ is parallel.
\end{proof}

\begin{rem}
	The condition $2\leq p\leq \dim\g-2$ in the Corollary above implies that $\dim\g\geq 4$. 	
	Note that for $p=2$, it was already known that there is not strict conformal Killing $2$-forms for $\dim\g\geq 4$, see Theorem \ref{almost-abelian-2-forms} $(1)$; while for $2< p\leq \dim\g-2$ it was not known.
\end{rem}

After the result above the only remaining cases to be considered are $p=1$ and $p=\dim \g-1$. 
In \cite[Proposition 2.1]{ABD} the authors prove that any CKY 1-form on $\g$ is Killing. 
%As a consequence, by taking the Hodge $*$-operator we have that any CKY $(\dim\g-1)$-form $\omega$ on $\g$ is closed, that is, $\omega$ is $*$-KY.
On the other hand, if $\omega=e^0\wedge\alpha+\beta$ is a CKY $p$-form with $p=\dim \g-1$, then its Hodge-dual $*\omega$ is a CKY $1$-form in $\g$, which is Killing for the same reason (then $\omega$ is a $*$-Killing $p$-form). Therefore, in order to characterize CKY $p$-forms on $\g$ for $p=1$ and $p=\dim \g-1$ we need to study Killing $1$-forms.

\medskip

Let $\omega=ce^0+\beta$  be a Killing $1$-form with $c\in\R$ and $\beta\in\u^*$. According to Proposition \ref{Killing-parallel}, $ce^0$ is parallel and $\beta$ is Killing. In particular, $e_0$ is never a {\it strict} Killing vector (non-parallel Killing).
Moreover, from Proposition \ref{w parallel}, $e^0$ is parallel if and only if $S=0$.
On the other hand, from Proposition \ref{1-forms Killing in u} $\beta$ is Killing if its dual vector $z$ satisfies $z\in\z(\g)=\Ker(A+S)$; while $\beta$ is parallel if $z\in \Ker A\cap\Ker S$.
We summarize this in the next result.

\begin{prop}\label{kernels}
Let $\g$ be $d$-dimensional almost abelian Lie algebra $\g=\R e_0 \ltimes_{S+A} \R^{d-1}$ equipped with an inner product $\pint$, with $\ad_{e_0}=S+A$ where $S$ and $A$ denote its symmetric and skew-symmetric components with respect to $\pint$, respectively. Then $(\g,\pint)$ admits a strict Killing vector and a strict CKY $(d-1)$-form (which is $*$-Killing) if and only if $\Ker A\cap\Ker S\subsetneq \Ker(A+S)$. In this case any $z\in\Ker(A+S)\setminus\Ker A\cap\Ker S$ determines a strict Killing vector. 
\end{prop}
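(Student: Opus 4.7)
The plan is to assemble Propositions \ref{w parallel}, \ref{Killing-parallel} and \ref{1-forms Killing in u} already proved and then pass to Hodge duals. Decomposing a would-be Killing $1$-form as $\omega = c e^0 + \beta$ with $c \in \R$ and $\beta \in \u^*$ in the sense of \eqref{descomposicion w}, Proposition \ref{Killing-parallel} says that $\omega$ is Killing if and only if $ce^0$ is parallel and $\beta$ is Killing. In particular, the $ce^0$ component is automatically parallel whenever $\omega$ is Killing, so $\omega$ fails to be parallel precisely when $\beta$ does; equivalently, $\omega$ is strict Killing iff $\beta$ is a strict Killing $1$-form.

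Next I would identify $\beta$ with its metric dual $z \in \u$ and invoke Proposition \ref{1-forms Killing in u}, which states that $\beta$ is Killing iff $z \in \Ker(A+S)$ and parallel iff $z \in \Ker A \cap \Ker S$. Combining both observations yields the characterization: $(\g, \pint)$ admits a strict Killing vector if and only if $\Ker A \cap \Ker S \subsetneq \Ker(A+S)$, and any $z$ in the complement produces an explicit example $\omega = z^*$ (taking $c = 0$).

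For the CKY $(d-1)$-form half of the statement, I would invoke the Hodge star $*:\Lambda^1\g^* \to \Lambda^{d-1}\g^*$, which is an isometry preserving parallel forms and interchanging Killing with $*$-Killing forms (as already noted in Remark \ref{*Killing-parallel}). The paragraph immediately preceding the proposition shows that every CKY $(d-1)$-form on $\g$ is automatically $*$-Killing, since its Hodge dual is a CKY $1$-form and every such form on a Lie group is Killing by \cite{ABD}. Hence $*$ sets up a bijection between strict Killing $1$-forms and strict CKY $(d-1)$-forms, both governed by the same kernel condition.

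The only fiddly point — not really an obstacle — is bookkeeping with the word \emph{strict} under the decomposition $\omega = ce^0 + \beta$: one must check that the $ce^0$ summand contributes nothing to non-parallelism once $\omega$ is Killing, so without loss of generality $c$ can be taken to be $0$ when exhibiting strict examples, and dually no $e^0$ obstruction appears on the $(d-1)$-form side either.
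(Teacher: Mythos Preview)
Your proposal is correct and follows essentially the same approach as the paper: the paper's justification is precisely the discussion in the two paragraphs immediately preceding the proposition, which decomposes a Killing $1$-form as $\omega=ce^0+\beta$, applies Proposition~\ref{Killing-parallel} (so $ce^0$ is parallel and $\beta$ is Killing), then Proposition~\ref{1-forms Killing in u} to translate into the kernel condition, and handles the $(d-1)$-form side via Hodge duality together with \cite[Proposition~2.1]{ABD}. Your write-up is in fact a bit more explicit than the paper's in spelling out why strictness of $\omega$ reduces to strictness of $\beta$, but there is no substantive difference in method.
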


\begin{rem}
	Note that according to Proposition \ref{kernels}, if $\Ker A\cap\Ker S= \Ker(A+S)$ then any Killing vector on $(\g,\pint)$ is parallel. In particular, that occurs when $S$ or $A$ vanishes.
\end{rem}

\begin{rem}
	Given a metric Lie algebra $\g=\R e_0 \ltimes\R^{d-1}$ with an inner product $\pint$, the space of solutions of (conformal) Killing  $p$-forms satisfies:
	$\mathcal {CK}^1(\g,\pint)=\mathcal {K}^1(\g,\pint)=\z$, $\mathcal {CK}^p(\g,\pint)=\mathcal {*K}^p(\g,\pint)=\z$ for $p=d-1$, moreover
	parallel vectors and parallel $(d-1)$-forms are in correspondence with $\Ker A\cap\Ker S$.
	On the other hand, $\mathcal{CK}^p(\g,\pint)$ for $2\leq p\leq d-2$ contains only parallel forms, which are described by Proposition \ref{w parallel}.
\end{rem}

\section{Low dimensional cases}
As an application we determine in this section all possible $d$-dimensional almost abelian Lie algebras admitting an inner product such that $(\g,\pint)$ carries a strict CKY $(d-1)$-form for $d\leq 5$. We also analyze the existence of lattices in the simply connected Lie groups associated to those Lie algebras. The notation in this section comes from \cite{ABDO} and from \cite{B}.

In dimension $2$ there is only one non abelian Lie algebra, it is $\aff(\R)$, and its center is trivial, therefore for any metric on $\aff(\R)$ there are not Killing vectors according to Proposition \ref{kernels}.

In dimension $3$, strict CKY $2$-forms only occur on $\h_3$ and $\aff(\R)\times\R$ according to Theorem \ref{almost-abelian-2-forms}. This is also an easy application of Proposition \ref{kernels}. Indeed, given
$\g=\R\ltimes_M\R^2$, according to Lemma \ref{ad-conjugated}, we may assume that $M$ is in 
its canonical Jordan form, up to scaling. Then, there are three different 
possibilities for $M$:
$$M=\begin{pmatrix}       
	\lambda &    \\
	& \mu  
\end{pmatrix}, \;\;
\begin{pmatrix}       
\lambda & 1   \\
& \lambda 
\end{pmatrix},\;\;\
\begin{pmatrix}       
	a &  -b  \\
	b & a  
\end{pmatrix}.$$
Assume $\g$ is endowed with a inner product such that $(\g,\pint)$ admits a strict Killing vector, then $\g$ must have center, and the possibilities reduce to
$$M=\begin{pmatrix}       
	1 &    \\
	& 0 
\end{pmatrix}, \;\;
\begin{pmatrix}       
	0 & 1   \\
	& 0 
\end{pmatrix}.$$
They correspond to the Lie algebras $\aff(\R)\times\R$ and $\h_3$ respectively.
Moreover, any inner product on $\h_3$ admits a strict CKY $2$-form; and the inner product $g_{1,t}$ for $t>0$ (see Theorem \ref{metricas en dim 3}) on $\aff(\R)\times\R$ admits a strict CKY $2$-form.

\subsection{dimension $4$}

Given $\g=\R\ltimes_M\R^3$, according to Lemma \ref{ad-conjugated}, we may assume that $M$ is in 
its canonical Jordan form, up to scaling. In this case, there are four different 
possibilities for $M$, given in a basis $\{e_1,e_2,e_3\}$ of $\R^3$ by:
$$M=\begin{pmatrix}       
	\mu &   &  \\
	& \lambda  &   \\
	&   &  \delta 
\end{pmatrix}, \,
\begin{pmatrix}       
	\mu &   &  \\
	& \lambda  & 1  \\
	&   &  \lambda 
\end{pmatrix}, \,
\begin{pmatrix}       
	\mu &  1 &  \\
	& \mu  & 1  \\
	&   &  \mu 
\end{pmatrix}, \,
\begin{pmatrix}       
	\mu &   &  \\
	& \lambda  & -1  \\
	& 1  &  \lambda 
\end{pmatrix},$$
for $\lambda, \mu, \delta\in \R$, $\mu^2+\lambda^2+\delta^2\neq 0$.
We focus on $M$ with non-trivial center due to Proposition \ref{kernels}. On the other hand, we are only interested in unimodular ones, since that is a necessary condition for its associated simple connected Lie group to admit lattices. Finally, Lemma \ref{ad-conjugated} allow us to multiply by any scalar. Therefore, we reduce our list to:
$$M=\begin{pmatrix}       
	0 &   &  \\
	& 0  & 1  \\
	&   &  0 
\end{pmatrix},
\begin{pmatrix}       
	0 &  1 &  \\
	& 0  & 1  \\
	&   &  0
\end{pmatrix},
\begin{pmatrix}       
	1&   &  \\
	& -1  &   \\
	&   &  0
\end{pmatrix}, 
\begin{pmatrix}       
	0 &   &  \\
	& 0  & -1  \\
	& 1  &  0 
\end{pmatrix}.$$
The Lie algebras associated to those matrices are $\h_3\times\R$, $\n_4$ (nilpotent), $\mathfrak r_{3,-1}\times\R$ (completely solvable, that is, $\ad_x$ has only real eigenvalues for any $x$), and $\mathfrak r'_{3,0}\times\R$ (non-completely solvable), respectively.
We follow the notation in \cite{ABDO} and list the Lie brackets in Table \ref{tabla_dim4}. The Lie algebra $\mathfrak r'_{3,0}$ is also denoted by $\mathfrak e(2)$.
These Lie algebras in Table \ref{tabla_dim4} are all possible candidate to admit an inner product carrying a CKY. For $\h_3\times\R$ and $\n_4$ the 
inner product $\pint$ defined such that the basis $\{e_0,e_1,e_2,e_3\}$ is orthonormal satisfies that $\Ker A\cap\Ker S\subsetneq \Ker(A+S)$, where
$\ad_{e_0}=S+A$ with $S$ and $A$ the symmetric and skew-symmetric components with respect to $\pint$, respectively. For the cases $\mathfrak{r}_{3,-1}\times \R$, and $\mathfrak r'_{3,0}\times\R$ the same inner product does not satisfy $\Ker A\cap\Ker S\subsetneq \Ker(A+S)$ since $A=0$ or $S=0$ respectively, therefore we have to find a different metric on them.
		\begin{table}[h!]
			\begin{tabular}{|c|c|}
				\hline
				\hline
				Lie algebra  & Lie brackets  \\ \hline \hline
				
				$\R\times \mathfrak{h}_3$& $[e_0,e_3]=e_2$
				\\ 
				 \hline		
			$\mathfrak{n}_4$	& $[e_0,e_2]=e_1$, $[e_0,e_3]=e_2$ \\ \hline
		$\mathfrak{r}_{3,-1}\times \R$&$[e_0,e_1]= e_1$, $[e_0,e_2]=-e_2$ \\ \hline
				$\mathfrak{e}(2)\times \R$& $[e_0,e_2]=e_3$, $[e_0,e_3]=-e_2$\\
				\hline	
			\end{tabular}
			\caption{Almost abelian 4-dimensional Lie algebras}
			\label{tabla_dim4}
	\end{table}

\begin{rem}
For $\g=\mathfrak{r}_{3,-1}\times \R=\R\ltimes_M\R^3$ with 
 $M=\operatorname{diag}(1,-1,0)$ in a basis $\{e_1,e_2,e_3\}$ of $\R^3$, we consider the inner product $\pint$ such that the basis $\{e_0,e_1,e_2+e_3,e_3\}$ is an orthonormal basis, then the decomposition of $M=S+A$ into the symmetric and skew-symmetric parts with respect to this inner product is $M=\begin{pmatrix}       
 	1 &   &  \\
 	& -1  & 0 \\
 	& 1  &  0 
 \end{pmatrix}
=\begin{pmatrix}       
	1 &   &  \\
	& -1  & \frac12  \\
	& \frac12  &  0 
\end{pmatrix} + 
\begin{pmatrix}       
	0 &   &  \\
	& 0  & -\frac12  \\
	& \frac12  &  0 
\end{pmatrix}$. Then  $(\mathfrak{r}_{3,-1}\times \R,\pint)$ satisfies  Proposition \ref{kernels}, and therefore $(\mathfrak r_{3,-1}\times\R,\pint)$ admits a strict CKY $3$-form.

Similarly, for $\mathfrak r'_{3,0}\times\R$ we consider the inner product $\pint$ such that the basis $\{e_0,e_1,e_2+e_3,e_3\}$ is an orthonormal basis. In this case, $M$ decomposes with respect to this new inner product as $M=\begin{pmatrix}       
	0 &   &  \\
	& -1  & -1 \\
	& 2 &  1 
\end{pmatrix} =
\begin{pmatrix}       
	0 &   &  \\
	& -1  & \frac12 \\
	& \frac12 &  1 
\end{pmatrix}+
\begin{pmatrix}       
	0 &   &  \\
	& 0  & -\frac32  \\
	& \frac32  &  0 
\end{pmatrix}$, and this satisfies Proposition \ref{kernels}.
\end{rem}
We can summarize the $4$-dimensional case as follows:
\begin{prop}\label{4 unimodular con CK}
	Let $\g=\R e_0 \ltimes_{S+A} \R^3$ be a $4$-dimensional unimodular almost abelian Lie algebra. Then $\g$ admits an inner product $\pint$ such that $(\g,\pint)$ carries a strict CKY $3$-form if and only if $\g$ is one of the following Lie algebras: $\h_3\times\R$,
	$\n_4$, $\mathfrak r_{3,-1}\times\R$ or $\mathfrak e(2)\times\R$.
\end{prop}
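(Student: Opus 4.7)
The plan is to combine Proposition \ref{kernels} with the Jordan normal form classification of $\ad_{e_0}|_\u$, restricted to unimodular almost abelian Lie algebras. By Proposition \ref{kernels}, a strict CKY $(d-1)$-form on $(\g,\pint)$ exists if and only if $\Ker A\cap \Ker S\subsetneq \Ker(A+S)$. In particular, this requires the center $\z(\g)=\Ker M$ to be nontrivial, so $0$ must be an eigenvalue of $M=\ad_{e_0}|_\u$. Unimodularity forces $\tr M=0$, and by Lemma \ref{ad-conjugated} we may work with $M$ in Jordan canonical form, up to nonzero scaling.

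For the forward implication, I would start from the four families of Jordan canonical forms listed in the excerpt and impose the two conditions $\tr M=0$ and $\Ker M\neq 0$. In each family the scaling freedom together with these constraints pins down $M$ uniquely (up to the sign conventions shown in the excerpt): in the diagonal case the eigenvalues must be $\{1,-1,0\}$ giving $\mathfrak{r}_{3,-1}\times\R$; the $2\times2$ Jordan block with the extra diagonal entry forces the block eigenvalue and the extra entry to both be $0$, yielding $\h_3\times\R$; the full $3\times 3$ Jordan block forces nilpotency, yielding $\n_4$; and the complex block forces the real part to vanish and produces $\mathfrak{e}(2)\times\R$. Any other choice violates either unimodularity or the center condition, so no other candidates survive.

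For the reverse implication I need to exhibit, for each of the four Lie algebras, an inner product realising $\Ker A\cap\Ker S\subsetneq \Ker(A+S)$. For $\h_3\times\R$ and $\n_4$ the ``standard'' inner product making $\{e_0,e_1,e_2,e_3\}$ orthonormal already works, since in these nilpotent cases $M$ is strictly upper triangular in this basis and a direct computation of $S,A$ shows some vector in $\Ker M$ lies outside $\Ker A\cap\Ker S$. For $\mathfrak{r}_{3,-1}\times \R$ and $\mathfrak{e}(2)\times \R$ the naive orthonormal basis produces $A=0$ or $S=0$ respectively, making the kernel condition fail; the remedy, as in the remark preceding the statement, is to use the twisted orthonormal basis $\{e_0,e_1,e_2+e_3,e_3\}$ so that $M$ acquires both a nontrivial symmetric and skew-symmetric part in the new metric, and a direct verification gives $e_3\in \Ker(A+S)\setminus(\Ker A\cap \Ker S)$.

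The main obstacle is the bookkeeping in the forward direction: handling the scaling freedom cleanly so that one shows the four listed matrices really exhaust all unimodular Jordan forms with $\det M=0$, rather than merely enumerate representatives. Once that reduction is carried out, both implications follow from Proposition \ref{kernels} applied to the explicit inner products exhibited above, and no further computation beyond the algebra of $2\times 2$ and $3\times 3$ matrices is required.
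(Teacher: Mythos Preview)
Your proposal is correct and follows essentially the same route as the paper: reduce via Lemma \ref{ad-conjugated} to Jordan forms, impose $\tr M=0$ and $\Ker M\neq 0$ to obtain exactly the four listed algebras, and then exhibit the standard orthonormal basis for $\h_3\times\R$, $\n_4$ and the twisted basis $\{e_0,e_1,e_2+e_3,e_3\}$ for $\mathfrak r_{3,-1}\times\R$ and $\mathfrak e(2)\times\R$ to verify Proposition \ref{kernels}. The only cosmetic difference is that the paper lists the four Jordan families explicitly before eliminating, whereas you describe the elimination case by case; the content is the same.
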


For a complete clasification of left-invariant metrics on $4$-dimensional solvable Lie groups we refer to \cite{TS20}. In particular, the nilpotent ones were analyzed in \cite{Lauret}.

\begin{rem}\label{lattices dim 4}
The simple connected Lie groups associated to those Lie algebras in Proposition \ref{4 unimodular con CK} admit lattices. Indeed, for the nilpotent cases, the simply connected nilpotent Lie groups corresponding to the Lie algebras $\h_3 \times \R$ and $\n_4$ admit lattices due to Malcev’s criterion \cite{Malcev}, since these Lie algebras have rational structure constants for some basis.
The completely solvable simply connected
Lie group corresponding to $\mathfrak r_{3,-1}\times\R$ admits lattices, since the simply connected Lie group corresponding to $\mathfrak r_{3,-1}$, denoted by $Sol_3$ does (see for example \cite{Medina}, this is the group of rigid motions of Minkowski 2-space). Finally, it is easy to check using Proposition \ref{latt} that the simply connected solvable Lie group $E(2)$ (group of rigid motions of the Euclidean $2$-space) associated to the Lie algebra $\mathfrak e (2)$ admits lattices, and therefore $E(2) \times \R$ admits lattices, as well.
\end{rem}

\subsection{dimension $5$}

Let $\g=\R\ltimes_M\R^4$, using Lemma \ref{ad-conjugated} we first focus on all possible Jordan form of $M$, and we obtain the following different matrices $M$ given in the basis $\{e_1,e_2,e_3,e_4\}$ of $\R^4$:
$$M=\begin{pmatrix}
	\lambda&1&&\\&\lambda&1&\\&&\lambda&1\\&&&\lambda
\end{pmatrix},
\begin{pmatrix}
	\lambda&1&&\\&\lambda&1&\\&&\lambda&\\&&&\mu
\end{pmatrix},
\begin{pmatrix}
	\lambda&1&&\\&\lambda&&\\&&\mu&1\\&&&\mu
\end{pmatrix},
\begin{pmatrix}
	\lambda&1&&\\&\lambda&&\\&&\mu&\\&&&\delta
\end{pmatrix},
\begin{pmatrix}
	\lambda&&&\\&\mu&&\\&&\delta&\\&&&\sigma
\end{pmatrix},
$$
$$\begin{pmatrix}
	a&-b&&\\b&a&&\\&&c&-d\\&&d&c
\end{pmatrix},
\begin{pmatrix}
	a&-b&1&\\b&a&&1\\&&a&-b\\&&b&a
\end{pmatrix},
\begin{pmatrix}
	a&-b&&\\b&a&&\\&&\lambda&\\&&&\mu
\end{pmatrix}
$$

It follow from Proposition \ref{kernels} that the center of $\g$ has to be non trivial in order to admit a Killing vector, and we can also multiply by a constant according to Lemma \ref{ad-conjugated}. Finally, if we require also unimodularity, then we reduce the previous list to:
%$$
%M=\begin{pmatrix}
%	0&1&&\\&0&1&\\&&0&1\\&&&0
%\end{pmatrix},
%\begin{pmatrix}
%	0&1&&\\&0&1&\\&&0&\\&&&\mu
%\end{pmatrix},
%\begin{pmatrix}
%\lambda&1&&\\&\lambda&1&\\&&\lambda&\\&&&0
%\end{pmatrix},
%\begin{pmatrix}
%	\lambda&1&&\\&\lambda&&\\&&0&1\\&&&0
%\end{pmatrix},
%$$
%$$\begin{pmatrix}
%	0&1&&\\&0&&\\&&\mu&\\&&&\delta
%\end{pmatrix},
%\begin{pmatrix}
%	\lambda&1&&\\&\lambda&&\\&&\mu&\\&&&0
%\end{pmatrix},
%\begin{pmatrix}
%	\lambda&&&\\&\mu&&\\&&\delta&\\&&&0
%\end{pmatrix},
%\begin{pmatrix}
%	a&-b&&\\b&a&&\\&&\lambda&\\&&&0
%\end{pmatrix}
%$$
%with $\lambda, \mu,\delta,a,b\in\R$ and $b\neq0$. 

 $$M=\begin{pmatrix}
	0&1&&\\&0&1&\\&&0&1\\&&&0
\end{pmatrix},
\begin{pmatrix}
	0&1&&\\&0&1&\\&&0&\\&&&0
\end{pmatrix},
\begin{pmatrix}
	0&1&&\\&0&&\\&&0&1\\&&&0
\end{pmatrix},
\begin{pmatrix}
	0&1&&\\&0&&\\&&0&\\&&&0
\end{pmatrix}$$
if $\g$ is nilpotent, which which give rise to the Lie algebras  $\g_{5,2}$, $\n_4\times \R$, $\g_{5,1}$ or $\h_3\times\R^2$, respectively.

If $\g$ is completely solvable we obtain $\g_{5,8}^{-1}$, $\mathfrak r_{4,-\frac12}\times\R$, $\mathfrak r_{3,-1}\times\R^2$, or $\mathfrak r_{4,\mu,-1-\mu}\times\R$, determined respectively by the matrices:
$$\begin{pmatrix}
	0&1&&\\&0&&\\&&1&\\&&&-1
\end{pmatrix},
\begin{pmatrix}
	1&1&&\\&1&&\\&&-2&\\&&&0
\end{pmatrix},
\begin{pmatrix}
	1&&&\\&-1&&\\&&0&\\&&&0
\end{pmatrix},
\begin{pmatrix}
	1&&&\\&\mu&&\\&&-1-\mu&\\&&&0
\end{pmatrix} \; \mu\neq0.$$

Finally in the non-completely solvable case we have:
$$\begin{pmatrix}
	\lambda&-1&&\\1&\lambda&&\\&&-2\lambda&\\&&&0
\end{pmatrix} \; \lambda\neq0,\;
\begin{pmatrix}
	0&-1&&\\1&0&&\\&&0&\\&&&0
\end{pmatrix},$$
which determine the Lie algebras
$\mathfrak r'_{4,-2\lambda,\lambda}\times\R$ and
$\mathfrak r'_{3,0}\times\R^2$, respectively.

For the sake of completeness we include the Lie bracket of those Lie algebras obtained before.  

\begin{table}[h!]
	\begin{tabular}{|c|c|}
		\hline
		\hline
		Lie algebra  & Lie brackets  \\ \hline \hline
		$\g_{5,2}$ &  $[e_0,e_2]=e_1, [e_0,e_3]=e_2,  [e_0,e_4]=e_3$\\
		\hline
		$\n_4\times \R$ &  $[e_0,e_2]=e_1, [e_0,e_3]=e_2$ \\
		\hline
		$\g_{5,1}$ &  $[e_0,e_2]=e_1,   [e_0,e_4]=e_3$\\
		\hline
		$\h_3\times\R^2$ &  $[e_0,e_2]=e_1$\\
		\hline
		$\g_{5,8}^{-1}$ &  $[e_0,e_2]=e_1, [e_0,e_3]=e_3,  [e_0,e_4]=-e_4$\\
		\hline
		$\mathfrak r_{4,-\frac12}\times\R$ &  $[e_0,e_1]=e_1, [e_0,e_2]=e_1+e_2,  [e_0,e_3]=-2e_3$\\
		\hline
		$\mathfrak r_{3,-1}\times\R^2$ &  $[e_0,e_1]=e_1, [e_0,e_2]=-e_2$\\
		\hline
		$\mathfrak r_{4,\mu,-1-\mu}\times\R$ &  $[e_0,e_1]=e_1, [e_0,e_2]=\mu e_2,  [e_0,e_3]=-(1+\mu)e_3$\\ 
		\hline
		$\mathfrak r'_{4,-2\lambda,\lambda}\times\R$ &  $[e_0,e_1]=\lambda e_1 +e_2, [e_0,e_2]=-e_1+\lambda e_2,  [e_0,e_3]=-2\lambda e_3$\\
		\hline
		$\mathfrak r'_{3,0}\times\R^2$ &  $[e_0,e_1]=e_2, [e_0,e_2]=-e_1$\\	
		\hline	
	\end{tabular}
	\caption{Almost abelian 5-dimensional Lie algebras}
	\label{tabla_dim5}
\end{table}

Now, for any Lie algebra above, we need to find an inner product such that the decomposition of $M=A+S$ with respect to this inner product satisfies the condition in Proposition \ref{kernels}. For those Lie algebras whose $M$ is neither symmetric nor skew-symmetric, the inner product defined such that the basis $\{e_0,e_1,e_2,e_3,e_4\}$ is orthonormal satisfies Proposition \ref{kernels}.

The remaining cases, where this inner product does not work are $\mathfrak r_{3,-1}\times\R^2$, $\mathfrak r_{4,\mu,-1-\mu}\times\R$ and $\mathfrak r'_{3,0}\times\R^2$ corresponding to:
$$\begin{pmatrix}
	1&&&\\&-1&&\\&&0&\\&&&0
\end{pmatrix},
\begin{pmatrix}
	1&&&\\&\mu&&\\&&-1-\mu&\\&&&0
\end{pmatrix} \; \mu\neq0, \begin{pmatrix}
0&-1&&\\1&0&&\\&&0&\\&&&0
\end{pmatrix}.$$

For $\mathfrak r_{3,-1}\times\R^2$ the inner product such that $\{e_0,e_1,e_2+e_3,e_3,e_4\}$ is orthonormal satisfies the condition in Proposition \ref{kernels}.
For $\mathfrak r_{4,\mu,-1-\mu}\times\R$ the inner product such that $\{e_0,e_1,e_2,e_3+e_4,e_4\}$ is orthonormal satisfies that condition.
Finally, for $\mathfrak r'_{3,0}\times\R^2$  the inner product such that $\{e_0,e_1+e_2,e_2,e_3,e_4\}$ is orthonormal satisfies the condition in Proposition \ref{kernels}.
Therefore, all Lie algebras with associated Jordan form as above admit a strict CKY $4$-form (and thus strict Killing vectors as well). We can summarize this in the following result.

\begin{prop}\label{5unimodular}
	A $5$-dimensional unimodular almost abelian Lie algebra $\g=\R e_0 \ltimes_{S+A} \R^{4}$ equipped with an inner product $\pint$. Then $(\g,\pint)$ admits a CKY strict $4$-form if and only if $\g$ is one of the following Lie algebras: $\g_{5,2}$($4$-step nilpotent), $\n_4\times \R$($3$-step nilpotent), $\g_{5,1}$($2$-step nilpotent), $\h_3\times\R^2$($2$-step nilpotent); $\g_{5,8}^{-1}$, $\mathfrak r_{4,-\frac12}\times\R$, $\mathfrak r_{3,-1}\times\R^2$,
	$\mathfrak r_{4,\mu,-1-\mu}\times\R$ (completely solvable); and 
	$\mathfrak r'_{4,-2\lambda,\lambda}\times\R$, 
	$\mathfrak r'_{3,0}\times\R^2$ (non-completely solvable).
\end{prop}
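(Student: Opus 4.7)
The plan is to combine the criterion of Proposition~\ref{kernels} with a Jordan-form classification of the operator $M=\ad_{e_0}|_\u$. Strictness of a CKY $4$-form on $(\g,\pint)$ is equivalent to $\Ker A\cap\Ker S\subsetneq\Ker(A+S)$, and since $\Ker(A+S)$ restricted to $\u$ equals $\Ker M$, a necessary condition is $\Ker M\neq 0$, i.e.\ $\g$ has non-trivial center. By Lemma~\ref{ad-conjugated}, $\g$ is determined up to isomorphism by the conjugacy class of $M$ modulo a non-zero real scalar.

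I would first enumerate all real Jordan canonical forms of a $4\times 4$ matrix, impose unimodularity $\tr M=0$, keep only those with $0$ as an eigenvalue (forced by $\Ker M\neq 0$), and finally use the scaling freedom to normalize the remaining free parameter. A case analysis on the partition of eigenvalues, split into (a) $M$ nilpotent, (b) $M$ with all eigenvalues real (completely solvable), and (c) $M$ with a genuine complex pair (non-completely solvable), yields exactly the list of matrices in the statement and the corresponding Lie algebras $\g_{5,2}$, $\n_4\times\R$, $\g_{5,1}$, $\h_3\times\R^2$, $\g_{5,8}^{-1}$, $\mathfrak r_{4,-\frac12}\times\R$, $\mathfrak r_{3,-1}\times\R^2$, $\mathfrak r_{4,\mu,-1-\mu}\times\R$, $\mathfrak r'_{4,-2\lambda,\lambda}\times\R$ and $\mathfrak r'_{3,0}\times\R^2$.

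For the converse direction, for each candidate $\g$ I would exhibit an explicit inner product satisfying $\Ker A\cap\Ker S\subsetneq\Ker M$. For most of the listed matrices, $M$ is neither symmetric nor skew-symmetric with respect to the inner product declaring $\{e_0,e_1,e_2,e_3,e_4\}$ orthonormal: then both $S$ and $A$ are non-zero, and a direct inspection shows that their kernels miss a central direction (typically the last basis vector of the centralizer). The exceptional cases are $\mathfrak r_{3,-1}\times\R^2$ and $\mathfrak r_{4,\mu,-1-\mu}\times\R$, where $M$ is diagonal (so $A=0$), and $\mathfrak r'_{3,0}\times\R^2$, where $M$ is skew-symmetric in that basis (so $S=0$); in each of these the strict-CKY criterion fails for the naive metric. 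For these three I would perturb the basis by a shear, in exact analogy with the $4$-dimensional remark preceding the proposition, taking $\{e_0,e_1,e_2+e_3,e_3,e_4\}$, $\{e_0,e_1,e_2,e_3+e_4,e_4\}$ and $\{e_0,e_1+e_2,e_2,e_3,e_4\}$ respectively as orthonormal, which introduces the missing symmetric or skew-symmetric component while retaining a central vector outside $\Ker A\cap\Ker S$.

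The main obstacle is the exhaustiveness of the first step: producing every Jordan-form representative after applying the trace-zero and scaling reductions and matching it to a Lie algebra in the nomenclature of \cite{ABDO}. The inner-product constructions in the converse are short and model themselves directly on the $4$-dimensional calculations already displayed, so no new phenomenon arises there.
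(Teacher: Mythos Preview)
Your proposal is correct and follows essentially the same route as the paper: the paper also reduces the necessary direction to the Jordan-form enumeration of $M$ under the constraints $\tr M=0$, $\Ker M\neq 0$, and scaling via Lemma~\ref{ad-conjugated}, and for the sufficient direction it uses the standard orthonormal basis except in exactly the three cases you single out, where it employs the very same sheared bases $\{e_0,e_1,e_2+e_3,e_3,e_4\}$, $\{e_0,e_1,e_2,e_3+e_4,e_4\}$, and $\{e_0,e_1+e_2,e_2,e_3,e_4\}$.
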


For a complete clasification of left-invariant metrics on $5$-dimensional nilpotent Lie groups we refer to \cite{FN}.
%\footnote{Las otras metricas no estan clasificadas creo. Capaz podemos ver si no es muy dificil dar las que quedan? }

\begin{rem}
	For the following cases we have that the simply connected Lie groups  associated to the Lie algebras in Proposition \ref{5unimodular} admit lattices: indeed, for  
	$\g_{5,2}$, $\n_4\times \R$, $\g_{5,1}$, $\h_3\times\R^2$ we can use Malcev' criterion; 
	for $\mathfrak r_{3,-1}\times\R^2$, $\mathfrak r'_{3,0}\times\R^2$ see Remark \ref{lattices dim 4};
	for $\mathfrak r_{4,\mu,-1-\mu}\times\R$ see \cite[Proposition 2.1]{Lee-etc} and for
	$\mathfrak r'_{4,-2\lambda,\lambda}\times\R$ see \cite{AO18}, both associated simply connected Lie groups admit lattices for countable many values of $\mu$ and $\lambda$;
	and for $\g_{5,8}^{-1}$ see \cite{B}. 
	On the other hand, the simply connected Lie group associated to $\mathfrak r_{4,-\frac12}\times\R$ does not admit lattices according to \cite[Theorem 7.1.1.]{B}, where this Lie algebra is denoted by $\g_{4,2}\times\R$.
\end{rem}

%\footnote{
%\color{red}Ideas
%
%1-Mirar Higher degree Killing forms.. de Viviana y Andrei en la J.of Algebra. Podriamos darle un esa orientacion tal vez.
%
%2-Pensar en las metricas. Esta este trabajo de Conti que no entiendo aun. Buscar paper de Jorge que hacia algo similar tambien
% \color{black}
%}


\begin{thebibliography}{99}\frenchspacing


	
	\bibitem{ABD} {\sc A. Andrada, M. L. Barberis, I. Dotti}, {\it Invariant solutions to the conformal 
		Killing–Yano equation on Lie groups}, J. Geom. Phys. \textbf{94} (2015), 199--208.
	
	\bibitem{ABDO} {\sc A. Andrada, M. L. Barberis, I. Dotti, G. Ovando}, {\it Product structures on four dimensional solvable Lie algebras}, Homology Homotopy Appl. {\bf 7} (2005), 9--37.
	
%	\bibitem{ABM} {\sc A. Andrada, M. L. Barberis, A. Moroianu},	{\it Conformal Killing $2$-forms on $4$-dimensional manifolds} Ann. Glob. Anal. Geom. {\bf 50} (2016), 381--394.
	
	\bibitem{AD} {\sc A. Andrada, I. Dotti}, {\it Conformal Killing-Yano $2$-forms}, Differential Geom. Appl. {\bf 58} (2018), 103--119. 
	
	\bibitem{AD20} {\sc A. Andrada, I. Dotti}, {\it Killing-Yano $2$-forms on $2$-step nilpotent Lie groups}, Geom. Dedicata {\bf 212} (2021), 415--424. 
	

%	\bibitem{AFV} {\sc A. Andrada, A. Fino, L. Vezzoni}, {\it A class of Sasakian $5$ manifolds}, Transf. Groups  \textbf{14} (2009),  493--512.
	
	\bibitem{AO18} {\sc A. Andrada, M. Origlia}, {\it Lattices on almost abelian Lie groups}, Manuscripta Math. \textbf{155} (2018), 389--417.
	
	\bibitem{BDS} {\sc M. L. Barberis, I. Dotti, O. Santill\'an}, {\it The Killing-Yano equation on Lie groups},
	Class. Quantum Grav.  \textbf{29} (2012), 065004. 10pp.
	
\bibitem{BMS}	{\sc F. Belgun, A. Moroianu, U. Semmelmann}, {\it Killing forms on symmetric spaces}, Differential Geom.	Appl. \textbf{24} (2006), 215--222.
	
	\bibitem{B} 
	{\sc C. Bock}, On low-dimensional solvmanifolds, \textit{Asian J. Math.} \textbf{20} (2016), 
	199--262.
	
%	\bibitem{BTF} {\sc C. P. Boyer, C. W. T$\phi$nnesen-Friedman}, {\it Extremal Sasakian geometry on $T^2\times S^3$ and related manifolds}, Compos. Math.  \textbf{149} (2013),  1431--1456.
	
%	\bibitem{CMdNMY}	{\sc B. Cappelletti-Montano, A. De Nicola,	J. C. Marrero,  I. Yudin}, {\it Sasakian nilmanifolds}, IMRN \textbf{15} (2014), 6648--6660.
	
	\bibitem{dBM19}
	{\sc V. del Barco, A. Moroianu}, {\it Killing forms on $2$-step nilmanifolds}, J. Geom. Anal.  \textbf{31} (2021), 863--887.
	
	\bibitem{dBM20}
	{\sc V. del Barco, A. Moroianu}, {\it Higher degree Killing forms on $2$-step nilmanifolds}, J. Algebra  \textbf{563} (2020), 251--273.
	
	\bibitem{dBM21}
	{\sc V. del Barco, A. Moroianu}, {\it Conformal Killing forms on $2$-step nilpotent Riemannian Lie groups}, Forum Math. \textbf{33} (2021), 1331--1347.  %https://doi.org/10.1515/forum-2021-0026.
	
	
	\bibitem{FN}
	{\sc A. Figula, P. Nagy}, {\it Isometry classes of simply connected nilmanifolds}, J. Geom. Phys. \textbf{132} (2018), 370--381.
	
	\bibitem{Freibert}
	{\sc M. Freibert},	{\it Cocalibrated structures on Lie algebras with a codimension one Abelian ideal},	Ann. 	Glob. Anal. Geom. \textbf{42} (2011)
	DOI: 10.1007/s10455-012-9326-0
	
%	\bibitem{deGraaf}
%	{\sc W. de Graaf}, {\it Classification of $6$-dimensional nilpotent Lie algebras over fields of characteristic not $2$}, J. Algebra \textbf{309} (2007), 640--653.
	
	\bibitem{Her-Ori 22} {\sc A. Herrera, M. Origlia}, {\it Invariant Conformal Killing–Yano 2-Forms
	on Five-Dimensional Lie Groups},  J. Geom. Anal. \textbf{32} (2022), 209--74.

	\bibitem{Her-Ori survey} {\sc A. Herrera, M. Origlia}, {\it A Survey on invariant conformal Killing forms on Lie groups}, arXiv:2312.16601.
	
	
%	\bibitem{Herrera} {\sc A. Herrera}, {\it Parallel skew-symmetric tensors on $4$-dimensional metric Lie algebras}, to appear in Rev. Un. Mat. Argentina doi.org/10.33044/revuma.2451.
	
	\bibitem{Kashiwada}
	{\sc T. Kashiwada}, {\it On conformal Killing tensor}, Natur. Sci. Rep. Ochanomizu
	Univ. \textbf{19} (1968), 67--74.
	
	\bibitem{Lauret}
	{\sc J. Lauret}, {\it 	Homogeneous Nilmanifolds of Dimensions 3 and 4},
	Geometriae Dedicata \textbf{68} (1997), 145--155.
	
	\bibitem{Lee-etc}
	{\sc J.B. Lee, K.B. Lee, J. Shin, S. Yi}, {\it Unimodular groups of type $R^3\rtimes R$.} J. Korean	Math. Soc. \textbf{44} (2007), 1121--1137.
	
	
	\bibitem{Malcev}
	{\sc A. Malcev}, {\em On solvable Lie algebras}, Bull. Acad. Sci. URSS. Sr. Math. [Izvestia Akad.	Nauk SSSR] \textbf{9} (1945), 329--356. 
	
	\bibitem{Medina} 
	{\sc A. Medina, P. Revoy}, {\it Lattices in symplectic Lie groups.} J. Lie Theory {\bf 17}, (2007), 27--39.
	
	\bibitem{Mi} {\sc J. Milnor}, {\em Curvature of left-invariant metrics
		on Lie groups}, Adv. Math. {\bf 21} (1976), 293--329.
	
	\bibitem{Moroianu-semmelmann} 
	{\sc A. Moroianu, U. Semmelmann}, {\it Killing forms on quaternion-K\"ahler manifolds}, Ann. Global Anal.
	Geom. \textbf{28} (2005), 319--335; erratum \textbf{34} (2008), 431--432.

\bibitem{semmelmann2} {\sc U. Semmelmann}, {\it Killing forms on $G_2$ and $Spin_7$-manifolds}, J. Geom. Phys. \textbf{56} (2006), no. 9, 1752--1766.	
	
	
	
	\bibitem{Moroianu-semmelmann-08} 
	{\sc A. Moroianu, U. Semmelmann}, {\it Twistor forms on Riemannian products}, J. Geom. Phys. \textbf{58} (2008), 1343--1345.
	
	
%	\bibitem{Mos75} {\sc G. Mostow}, {\em Discrete subgroup of Lie groups}, Adv. Math. {\bf 15} (1975), 112--123.
%	
%	\bibitem{Ov} {\sc G. Ovando}, {\it Complex, symplectic and K\"ahler structures on four dimensional Lie groups}, Rev. Un. Mat. Argentina.  \textbf{45} (2004), 55--68.
%	

\bibitem{O.Santillan} {\sc O. Santill\'an}, {\it Hidden symmetries and supergravity solutions}, J. Math. Phys. \textbf{53},  043509 (2012).% doi: 10.1063/1.3698087

%	
%	\bibitem{SAGE}
%	SageMath, the Sage Mathematics Software System (Version 9.2), The Sage Developers, 2020, http://www.sagemath.org.
%	
%	\bibitem{Semmelmann}
%	{\sc U. Semmelmann}, {\it Conformal Killing forms on Riemannian manifolds}, Math. Z. \textbf{245} (2003), 503--527.
%	
	%\bibitem{Stepanov} {\sc S.E. Stepanov}, {\it The vector space of conformal Killing forms on a Riemannian manifold}, J. Math. Sci. \textbf{110} (2002), 2892--2906.
	
	\bibitem{TS20} {\sc T. Sukilovi\'c}, {\it Classification of left-invariant metrics on  $4$-dimensional solvable Lie groups}, Theoretical and Applied Mechanics. \textbf{47} (2020), 181--204. %https://doi.org/10.2298/TAM200826014S
	
	
	
		
	\bibitem{Semmelmann}
	{\sc U. Semmelmann}, {\it Conformal Killing forms on Riemannian manifolds}, Math. Z. \textbf{245} (2003), 503--527.
	
	\bibitem{Stepanov} {\sc S.E. Stepanov}, {\it The vector space of conformal Killing forms on a Riemannian manifold}, J. Math. Sci. \textbf{110} (2002), 2892--2906.
	
		\bibitem{Tachibana1} 
	{\sc S. Tachibana}, { \it On  Killing tensors in a Riemannian space}, Tohoku Math. J. \textbf{20} (1969), 257--264.
	


	
		
	
	
	
	
	
	\bibitem{Tachibana} 
	{\sc S. Tachibana}, { \it On conformal Killing tensors on Riemannian manifolds}, Tohoku Math. J. \textbf{21} (1969), 56--64.
	
%	\bibitem{Penrose-Walker}
%	{\sc M.Walker, R. Penrose}, {\it On quadratic first integrals of the geodesic equations for type $\{22\}$ spacetimes}, Commun. Math. Phys. \textbf{18} (1970), 265--274.	
	\bibitem{Yamaguchi} {\sc S. Yamaguchi}, {\it On a Killing $p$-form in a compact Kählerian manifold}, Tensor (N. S.) \textbf{29} (1975), no. 3, 274--276.
	
	\bibitem{Yano1}
	{\sc K. Yano}, {\it Some remarks on tensor fields and curvature}, Ann. of Math. \textbf{55} (1952), 328--347.
	
	
\end{thebibliography}
\end{document}